\numberwithin{equation}{section}
\theoremstyle{plain}
\newtheorem{theorem}{Theorem}[section]
\newtheorem{proposition}[theorem]{Proposition}
\newtheorem{lemma}[theorem]{Lemma}
\newtheorem*{dimension-drop}{Dimension drop conjecture}
\theoremstyle{remark}
\newtheorem*{ack}{Acknowledgement}
\theoremstyle{definition}
\newcommand{\DD}{\mathcal{D}}
\newcommand{\EE}{\mathcal{E}}
\newcommand{\CC}{\mathcal{C}}
\newcommand{\R}{\mathbb{R}}
\newcommand{\N}{\mathbb{N}}
\newcommand{\iii}{\mathtt{i}}
\newcommand{\jjj}{\mathtt{j}}
\newcommand{\kkk}{\mathtt{k}}
\renewcommand{\lll}{\mathtt{l}}
\newcommand{\eps}{\varepsilon}
\newcommand{\fii}{\varphi}
\newcommand{\dd}{\,\mathrm{d}}
\renewcommand{\ge}{\geqslant}
\renewcommand{\le}{\leqslant}
\renewcommand{\geq}{\geqslant}
\renewcommand{\leq}{\leqslant}
\DeclareMathOperator{\dimh}{dim_H}
\DeclareMathOperator{\dimp}{dim_p}
\DeclareMathOperator{\dims}{dim_{sim}}
\DeclareMathOperator{\diam}{diam}
\DeclareMathOperator{\proj}{proj}
\DeclareMathOperator{\conv}{conv}
\begin{document}

\title{Super-exponential condensation without exact overlaps}

\author{Bal\'azs B\'ar\'any}
\address[Bal\'azs B\'ar\'any]
        {Budapest University of Technology and Economics \\
         Department of Stochastics\\
         MTA-BME Stochastics Research Group\\
         P.O.\ Box 91\\
         1521 Budapest\\
         Hungary}
\email{balubsheep@gmail.com}

\author{Antti K\"aenm\"aki}
\address[Antti K\"aenm\"aki]
        {Department of Physics and Mathematics \\
         University of Eastern Finland \\
         P.O.\ Box 111 \\
         FI-80101 Joensuu \\
         Finland}
\email{antti.kaenmaki@uef.fi}

\thanks{The research of B\'ar\'any was supported by the grants OTKA K123782, NKFI PD123970, and the J\'anos Bolyai Research Scholarship of the Hungarian Academy of Sciences.}
\subjclass[2010]{Primary 28A80; Secondary 28A78.}
\keywords{Iterated function systems, exponential separation, super-exponential condensation, self-similar sets, Hausdorff dimension.}
\date{\today}

\begin{abstract}
  We exhibit self-similar sets on the line which are not exponentially separated and do not generate any exact overlaps. Our result shows that the exponential separation, introduced by Hochman in his groundbreaking theorem on the dimension of self-similar sets, is too weak to describe the full theory.
\end{abstract}

\maketitle


\section{Introduction}

A self-similar set consists of similar copies of itself. A classical result of Hutchinson \cite{Hutchinson1981} shows that if these copies are separated enough, then the Hausdorff dimension of the self-similar set equals the similarity dimension, a natural upper bound for the dimension. In order to handle overlaps, Simon and Pollicott \cite{PoSi} introduced the transversality condition. Simon and Solomyak \cite{SiSo} used this condition to show that in the line, for almost every choice of translations, the dimension of the self-similar set equals the similarity dimension.

In his seminal paper, Hochman \cite{Hochman2014} strengthened the estimates on the exceptional parameters for which the dimension drops below the similarity dimension. He showed that exponential separation suffices for the equality of the Hausdorff and similarity dimensions. The classical transversality argument gives an upper bound (which depends on the similarity dimension) for the dimension of the exceptional parameters; see Peres and Schlag~\cite{PeSch}. Hochman managed to show that the packing dimension of the exceptional set is zero. Furthermore, Shmerkin and Solomyak \cite{ShmerkinSolomyak2016} used similar techniques and conditions to study the absolute continuity of self-similar measures, and Shmerkin \cite{Shmerk} applied this approach to study the $L^q$-spectrum of self-similar measures.

A folklore conjecture proposes that the only possibility for the Hausdorff dimension to be strictly less than the similarity dimension is the existence of exact overlaps. The conjecture can analogously be formulated for self-similar measures. Varj\'u \cite{Varju} studied the dimension of Bernoulli convolutions, which is a certain class of self-similar measures. He proved that there is no dimension drop if the contraction parameter is transcendental. A corollary of Hochman's result \cite{Hochman2014}, which is stated in Breuillard and Varj\'u~\cite{BV}, implies that if the contraction ratio is algebraic, then the dimension of the Bernoulli convolution can be explicitly calculated. In particular, this means that the conjecture holds for Bernoulli convolutions. 
	
Hochman \cite{Hochman2014} showed that the dimension drop implies super-exponential condensation. In \cite[p.~1948]{Hochmanproc}, he remarked that it is not known if there exists a super-exponentially condensated self-similar set without exact overlaps. He also speculated that such self-similar sets simply do not exist, which would then prove the conjecture. We answer this in negative by constructing uncountably many parametrized homogeneous self-similar sets having super-exponential condensation but no exact overlaps. Very recently, independently of us, Baker \cite{Baker2019} showed the existence of such a self-similar set. In fact, after his result appeared online, we decided to make our considerations public as well. While Baker applied the theory of continued fractions, our proof relies on non-linear projections and the transversality condition.


The observation that the super-exponential condensation does not imply the exact overlapping means that, in order to verify or disprove the conjecture, one has to study the overlaps in a more sophisticated way. By applying Hochman \cite{Hochman2014}, we characterize the dimension drop of the natural measure on a homogeneous self-similar set by means of the average exponential separation. Our results, therefore, introduce a possible roadmap to disprove the conjecture.

\section{Preliminaries and main results}

We consider a tuple $\Phi = (\fii_i)_{i \in I}$, where $I$ is a finite index set, of contracting similitudes acting on $\R^d$. Each of the map $\fii_i$ has the form $\fii_i(x) = \lambda_iO_ix+t_i$, where $0<\lambda_i<1$ is the \emph{contraction}, $O_i$ the \emph{orthogonal part}, and $t_i \in \R^d$ the \emph{translation} of $\fii_i$. We say that $\Phi$ is \emph{homogeneous} if there exists $0<\lambda<1$ such that $\lambda_i=\lambda$ for all $i \in I$. A \emph{self-similar set} associated to $\Phi$ is the unique non-empty compact set $X \subset \R^d$ for which
\begin{equation} \label{eq:ss-invariant}
  X = \bigcup_{i \in I} \fii_i(X).
\end{equation}
The existence and uniqueness of such sets was proved by Hutchinson \cite{Hutchinson1981}.
The self-similar set $X$ is \emph{homogeneous} if it is given by a homogeneous tuple. Writing $\fii_\iii = \fii_{i_1} \circ \cdots \circ \fii_{i_n}$ and $\lambda_\iii = \lambda_{i_1} \cdots \lambda_{i_n}$, we have $\diam(\fii_\iii(B)) = \lambda_\iii \diam(B) \le (\max_{i \in I} \lambda_i)^n \diam(B)$ for all sequences $\iii = i_1 \cdots i_n \in I^n$ and sets $B \subset \R^d$. Therefore, defining $\iii|_n = i_1 \dots i_n$ for all $\iii = i_1i_2\cdots \in I^\N$, we see that $\diam(\fii_{\iii|_n}(B)) \to 0$ as $n \to \infty$ for all $\iii \in I^\N$ and bounded sets $B \subset \R^d$. Each $\iii \in I^\N$ corresponds to one point in $X$ via the \emph{canonical projection} $\pi$ defined by the relation
\begin{equation*}
  \{\pi(\iii)\} = \{\lim_{n \to \infty} \fii_{\iii|_n}(0)\} = \bigcap_{n=1}^\infty \fii_{\iii|_n}(B(0,R)),
\end{equation*}
where $R = \max_{i \in I}|\fii_i(0)|/(1-\max_{i \in I}\lambda_i)$ and $\fii_i(B(0,R)) \subset B(0,R)$ for every $i\in I$. In fact, it is easy to see that $\pi(I^\N) = X$ and hence, the canonical projection introduces an alternative way to define the self-similar set. By iterating \eqref{eq:ss-invariant}, we see that $X = \bigcup_{\iii \in I^n} \fii_\iii(X)$ for all $n \in \N$. Therefore, the family $\{ \fii_\iii(B(0,R)) \}_{\iii \in I^n}$ consisting of balls as small as we wish is a natural cover for $X$. It is easy to see that $\dimh(X) \le \dims(\Phi)$, where $\dimh$ is the Hausdorff dimension and the \emph{similarity dimension} $\dims(\Phi)$ is the unique number $s \ge 0$ for which $\lim_{n \to \infty}(\sum_{\iii \in I^n} \lambda_\iii^sR^s)^{1/n} = \sum_{i \in I} \lambda_i^s = 1$.

It is well known that if the \emph{strong separation condition} is satisfied, which means that $\fii_i(X) \cap \fii_j(X) = \emptyset$ whenever $i \ne j$, then $\dimh(X) = \dims(\Phi)$. The strong separation condition can be relaxed to a slightly weaker assumption, called the open set condition, which, roughly speaking, means that the overlapping of the sets $\fii_\iii(X)$ of essentially the same diameter has bounded multiplicity. It has to be emphasized that the open set condition only allows ``slight overlaps''. For example, if $X$ has \emph{exact overlaps}, meaning that there are finite sequences $\iii \ne \jjj$ such that $\fii_\iii = \fii_\jjj$, then $\dimh(X) < \dims(\Phi)$. Indeed, by denoting the \emph{length} of $\iii$ by $|\iii|$ and the \emph{concatenation} of $\iii$ and $\jjj$ by $\iii\jjj$, we may, by replacing $\iii$ and $\jjj$ by $\iii\jjj$ and $\jjj\iii$, assume that the finite sequences $\iii$ and $\jjj$ have the same length $|\iii|=|\jjj|=n$. Therefore, $\dimh(X) \le \dims(\Phi^n) < \dims(\Phi)$, where $\Phi^n$ is the tuple consisting of $(\# I)^n - 1$ many $n$-length compositions of the maps $\fii_i$ -- all of them but $\fii_\jjj$. Currently, for self-similar sets in the real line with $\dims(\Phi) \le 1$, no other mechanism is known which drops the dimension of $X$ below the similarity dimension. Trivially, if $\dims(\Phi)>1$, then $\dimh(X) \le 1$. The following folklore conjecture has probably first time been stated by Simon \cite{Siex}.

\begin{dimension-drop}
  If $\Phi$ is a tuple of contractive similitudes acting on the real line and $X \subset \R$ is the associated self-similar set such that $\dimh(X) < \min\{1,\dims(\Phi)\}$, then $X$ has exact overlaps.
\end{dimension-drop}

There exist a version of the conjecture also in higher dimensions, see Hochman \cite[Conjecture 1.3]{Hochmanrd}, but from now on, unless otherwise stated, we work only on the real line. In this case, the orthogonal part of the maps is just a multiplication by $1$ or $-1$ and therefore, we include it in the contraction. The quantity
\begin{equation*}
  \Delta_n = \min\{|\fii_\iii(0)-\fii_\jjj(0)| : \iii,\jjj \in I^n \text{ such that } \iii \ne \jjj \text{ and } \lambda_\iii = \lambda_\jjj\}
\end{equation*}
is zero for arbitrary large $n$ if and only if there is an exact overlap. It is also easy to see that $\Delta_n \to 0$ at least exponentially for every $\Phi$. We say that $\Phi$ is \emph{exponentially separated} if there is $c>0$ such that $\Delta_n \ge c^n$ for arbitrary large $n$. Hochman \cite[Corollary 1.2]{Hochman2014} has shown that if $\dimh(X) < \min\{1,\dims(\Phi)\}$, then there is \emph{super-exponential condensation}, which means that $\Delta_n \to 0$ super-exponentially, $\lim_{n \to \infty} \tfrac{1}{n} \log\Delta_n = -\infty$. In other words, if $\Phi$ is exponentially separated, then there is no dimension drop. In particular, if $\Phi$ is defined by using algebraic parameters and there are no exact overlaps, then $\Phi$ is exponentially separated; see Hochman \cite[proof of Theorem 1.5]{Hochman2014}. Therefore, the dimension drop conjecture holds for all $\Phi$ defined by using algebraic parameters. Recently, Rapaport~\cite{Rap} strengthened this result to hold with the assumption that only the contraction ratios are algebraic.

In our main result, we show that Hochman's theorem, as stated, is still too weak to address the full conjecture. Let us define parametrized maps $\fii_i^{\lambda,t} \colon \R \to \R$ for $i \in \{1,2,3\}$ by
\begin{equation*}
  \fii_1^{\lambda,t}(x) = \lambda x, \qquad \fii_2^{\lambda,t}(x) = \lambda x + t, \qquad \fii_3^{\lambda,t}(x) = \lambda x + 1
\end{equation*}
for all $0 < \lambda < \tfrac13$ and $0 < t < \lambda/(1-\lambda)$. Write $\Phi_{\lambda,t} = (\fii_1^{\lambda,t},\fii_2^{\lambda,t},\fii_3^{\lambda,t})$ and let $X_{\lambda,t}$ be the associated self-similar set. Note that the restriction $t < \lambda/(1-\lambda) < (1-2\lambda)/(1-\lambda)$ guarantees that $\fii_1^{\lambda,t}(\conv(X_{\lambda,t})) \cap \fii_2^{\lambda,t}(\conv(X_{\lambda,t})) \ne \emptyset$ and $\fii_2^{\lambda,t}(\conv(X_{\lambda,t})) \cap \fii_3^{\lambda,t}(\conv(X_{\lambda,t})) = \emptyset$, where $\conv(A)$ is the convex hull of a given set $A$. Let us define three planar sets specific to this setting. The \emph{exact overlapping set} is
\begin{equation*}
  \EE = \{(\lambda,t) : \fii_\iii^{\lambda,t} = \fii_\jjj^{\lambda,t} \text{ for some finite sequences } \iii \ne \jjj \},
\end{equation*}
the \emph{dimension drop set} is
\begin{equation*}
  \DD = \{(\lambda,t) : \dimh(X_{\lambda,t}) < \dims(\Phi_{\lambda,t}) = -\log 3/\log \lambda \},
\end{equation*}
and the \emph{super-exponential condensation set} is
\begin{equation*}
 \CC = \{(\lambda,t) : \lim_{n \to \infty} \tfrac{1}{n}\log\Delta_n^{\lambda,t} = -\infty\},
\end{equation*}
where $\Delta_n^{\lambda,t} = \min\{|\fii_\iii^{\lambda,t}(0)-\fii_\jjj^{\lambda,t}(0)| : \iii,\jjj \in \{1,2,3\}^n \text{ such that } \iii \ne \jjj\}$. We are also interested in specifying the convergence speed in the super-exponential condensation set. Therefore, we define the \emph{$\eta$-condensation set} to be
\begin{equation*}
  \mathcal{C}_\eta = \{(\lambda,t) : \Delta_n^{\lambda,t} < \eta_n\text{ for all } n \in \N\} \subset \CC,
\end{equation*}
where $\eta=(\eta_n)_{n \in \N}$ is a given monotone decreasing sequence of positive real numbers such that $\lim_{n\to\infty}\frac{1}{n}\log\eta_n=-\infty$.

As discussed above, we trivially have $\EE \subset \DD$ and, by Hochman~\cite[Theorem~1.2]{Hochman2014}, $\DD \subset \CC$. Furthermore, by Hochman \cite[Theorem~1.10]{Hochmanrd}, we have $\dimh(\EE) = \dimp(\CC) = 1$, where $\dimp$ is the packing dimension. For the parametrized tuple $\Phi_{\lambda,t}$, the dimension drop conjecture is equivalent to $\DD \setminus \EE = \emptyset$. Very recently, by developing new techniques, Rapaport and Varj\'u~\cite[Corollary~1.4]{RapVar} showed that if $\DD\setminus\EE \ne \emptyset$, then $\dimh(\DD\setminus\EE) = 0$.

The following result shows that there exist self-similar sets having super-exponential condensation without exact overlaps.

\begin{theorem} \label{thm:main2}
If $\eta = (\eta_n)_{n \in \N}$ is a monotone decreasing sequence of positive real numbers such that $\lim_{n\to\infty}\frac{1}{n}\log\eta_n=-\infty$, then for the parametrized tuple $\Phi_{\lambda,t}$ defined above, the set $\CC_\eta \setminus \EE$ is uncountable.
\end{theorem}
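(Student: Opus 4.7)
The plan is to realize $\CC_\eta\setminus\EE$ as a Cantor-type subset of the $(\lambda,t)$-plane via an inductive construction. The key reduction relies on a \emph{left-concatenation identity}: since each $\fii_i^{\lambda,t}$ has linear part $\lambda$, for any finite words $\hhh,\iii,\jjj$ with $|\iii|=|\jjj|$,
\begin{equation*}
  \fii_{\hhh\iii}^{\lambda,t}(0)-\fii_{\hhh\jjj}^{\lambda,t}(0) = \lambda^{|\hhh|}\bigl(\fii_\iii^{\lambda,t}(0)-\fii_\jjj^{\lambda,t}(0)\bigr),
\end{equation*}
so $\Delta_n^{\lambda,t}\le\lambda^{n-n_k}\Delta_{n_k}^{\lambda,t}$ for every $n\ge n_k$. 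Because $\eta$ decays super-exponentially while $\lambda^{n-n_k}$ decays only exponentially, one can inductively choose levels $n_1<n_2<\cdots$ and thresholds $\delta_k>0$ satisfying $\lambda^{n-n_k}\delta_k<\eta_n$ throughout $[n_k,n_{k+1})$. It therefore suffices to produce uncountably many $(\lambda,t)\notin\EE$ for which $\Delta_{n_k}^{\lambda,t}<\delta_k$ for every $k$.

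The analytic engine is a density-of-resonances statement. For each pair $\iii\neq\jjj$ of length $n$, the difference
\begin{equation*}
  F_{\iii,\jjj}(\lambda,t)=\fii_\iii^{\lambda,t}(0)-\fii_\jjj^{\lambda,t}(0)
\end{equation*}
is linear in $t$ and polynomial in $\lambda$; when it is not identically zero, its vanishing locus $Z_{\iii,\jjj}$ is an analytic curve. Using a transversality estimate in the spirit of Pollicott--Simon and Peres--Schlag--Solomyak, adapted to the non-linear dependence on $\lambda$, I will establish that there is an open subrectangle $U$ of the admissible parameter region with the following property: for every open $V\subset U$ and every $n_0\in\N$ there exist two pairs $\iii_0\neq\jjj_0$ and $\iii_1\neq\jjj_1$ of some common length $n\ge n_0$ whose vanishing curves meet $V$ at positively separated points where $|\partial_t F_{\iii_\varepsilon,\jjj_\varepsilon}|$ is bounded below by a constant depending only on $U$. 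The lower bound converts each resonance into an open two-dimensional neighbourhood of controlled size on which $|F_{\iii_\varepsilon,\jjj_\varepsilon}|<\delta_{k+1}$.

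With these two ingredients in hand, the construction proceeds as a standard Cantor scheme. Inductively, suppose disjoint closed rectangles $\{R_\omega\}_{\omega\in\{0,1\}^k}$ in $U$ have been chosen with $\Delta_{n_j}^{\lambda,t}<\delta_j$ on each $R_\omega$ for every $j\le k$. Take $n_{k+1}$ large enough for the super-exponential bound above, apply the density statement inside the interior of each $R_\omega$ to obtain two disjoint sub-rectangles $R_{\omega 0},R_{\omega 1}\subset R_\omega$ on which $\Delta_{n_{k+1}}^{\lambda,t}<\delta_{k+1}$, and shrink them slightly to avoid the finitely many exact-overlap curves $Z_{\iii,\jjj}$ with $|\iii|=|\jjj|\le n_{k+1}$, which form a closed nowhere-dense subset of $R_\omega$. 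The nested intersection $K=\bigcap_k\bigcup_{\omega\in\{0,1\}^k}R_\omega$ has the cardinality of the continuum, is contained in $\CC_\eta$ by the reduction, and avoids $\EE$ by construction, whence $\CC_\eta\setminus\EE$ is uncountable.

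The main obstacle is the density of resonances in the second paragraph. One must prove that the analytic curves $Z_{\iii,\jjj}$, ranging over all pairs of all lengths, topologically fill $U$ in a controlled, quantitative fashion. The joint freedom in $\lambda$ and $t$ is essential here: for a fixed $\lambda$ the zeros of the linear-in-$t$ polynomial $F_{\iii,\jjj}$ need not be dense in $t$, and it is the non-linear polynomial dependence on $\lambda$ that spreads the resonances throughout $U$. This is precisely where the non-linear projections and the transversality condition alluded to in the introduction do their work.
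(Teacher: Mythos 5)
There is a genuine gap, in fact two. First, your opening reduction has the inequality pointing the wrong way. You need $\Delta_n^{\lambda,t}<\eta_n$ for \emph{every} $n$, and on a block $[n_k,n_{k+1})$ the only bound your rectangles provide is $\Delta_n^{\lambda,t}\le\lambda^{n-n_k}\Delta_{n_k}^{\lambda,t}<\lambda^{n-n_k}\delta_k$, which decays only exponentially in $n$, whereas $\eta_n$ decays super-exponentially; hence for $n$ near the top of the block the required inequality $\lambda^{n-n_k}\delta_k<\eta_n$ \emph{fails} unless $\delta_k\le\min_{n_k\le n<n_{k+1}}\lambda^{n_k-n}\eta_n$, a quantity that depends on $n_{k+1}$. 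But in your scheme $\delta_k$ is fixed when the stage-$k$ rectangles are built, while $n_{k+1}$ is only produced afterwards by your density statement ("some common length $n\ge n_0$", with no upper control), so the choices cannot be made in the order you describe. The paper avoids this by not letting $t$ range over a rectangle at all: $t^*(\omega)$ is defined as the \emph{limit} of the resonance values $\proj(S^{\lambda^*(\omega)}_{\kkk(\omega|_m)}(0,0))$, and for an intermediate level $n$ the witnessing pair is the previous word padded by $(0,0)$'s, whose distance to $t^*$ is bounded by the telescoping tail $\tfrac32\delta^{-1}\sum_{\ell\ge|\kkk(\omega|_{m(n)})|}\eta_\ell\le\tfrac32\delta^{-1}C\eta_{|\kkk(\omega|_{m(n)})|}\le\tfrac32\delta^{-1}C\eta_n$ by monotonicity of $\eta$; i.e.\ the accuracy at level $n$ is governed by the \emph{next} threshold, not by the previous one propagated forward. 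To salvage your Cantor scheme you would have to retroactively shrink each rectangle at stage $k+1$ so that the old level-$n_k$ resonance becomes smaller than $\eta_{n_{k+1}}$, which is essentially the paper's telescoping in disguise and again requires quantitative transversality to keep the shrunken sets nonempty and disjoint.

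Second, the analytic core — your "density-of-resonances" statement with a uniform lower bound on $|\partial_t F|$ and two positively separated resonance curves in every open set at arbitrarily deep levels — is only announced ("I will establish"), not proved, and it is precisely where all the work lies. In the paper this is carried by Lemmas 3.3--3.6: the resonance values of $t$ are the nonlinear projections $\proj(x,y)=x/y$ of an induced planar self-similar set $K_\lambda$ with alphabet $J$, one shows that suitable cylinder projections $\proj(S^\lambda_\kkk(K_\lambda))$ are full intervals for $\tfrac14<\lambda<\tfrac13$ (so the resonance values are in fact dense in an interval of $t$ for each fixed such $\lambda$ — note this contradicts your closing heuristic that fixed-$\lambda$ resonances need not be dense), and transversality is established in the $\lambda$-variable for two explicit families of cylinders, which is what yields two disjoint parameter intervals at each step and, together with a transcendence argument, the quantitative separation from all shorter-level resonances that rules out exact overlaps. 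Without a proof of this ingredient, and with the block-covering reduction as stated, the proposal does not yet constitute a proof; your mechanism for avoiding $\EE$ (excising the finitely many nowhere-dense curves $Z_{\iii,\jjj}$ at each stage) is, by contrast, sound and somewhat simpler than the paper's quantitative lower bounds.
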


Independent of our work, Baker \cite{Baker2019} has recently introduced a similar result; see also \cite{Baker2020}. In \cite{Baker2019}, he showed the existence of a self-similar set on the line, with at least 6 defining maps having rational contracting ratios, such that there is super-exponential condensation with arbitrary fast convergence speed but no exact overlaps. Chen~\cite{chen}, by adapting the method of Baker, obtained further examples of this type by using only 4 maps. Baker used continued fraction expansion and the Diophantine approximation of the translation parameters. This allowed Rapaport~\cite{Rap} to show that there exists a super-exponentially condensated self-similar set on the line having no dimension drop. Our result uses 3 maps and the proof relies on non-linear projections and the transversality method. We note that it is still unknown whether it is possible to achieve super-exponential condensation without exact overlaps by using only 2 maps. For instance, in the case of Bernoulli convolutions which is probably the simplest example of self-similar measures constructed by using 2 maps, this problem is strongly related to determining the distance of roots of polynomials with integer coefficients; see \cite[Question~3.8]{Hochmanproc}. In fact, unlike in the case of 3 maps, it is not possible to achieve arbitrary convergence speed in the super-exponential condensation with 2 maps having the same contraction ratio. Applying the bound of Mahler \cite{Mah}, one can show that if there are no exact overlaps, then $\Delta_n\geq n^{-cn}$ for some $c>0$; see \cite[Section~3.4]{Hochmanproc}. 

Finally, we characterize the dimension drop of the natural measure on homogeneous self-similar sets $X \subset \R$ by means of the \emph{average exponential condensation} defined by
\begin{equation*}
\Lambda(\gamma) = \liminf_{n\to\infty} \frac{1}{n} \sum_{\iii \in I^n} \frac{1}{\# I^n} \log\#\{\jjj\in I^n : |\fii_\iii(0)-\fii_\jjj(0)|\le\gamma^n\}
\end{equation*}
for all $\gamma > 0$. The \emph{natural measure} is the Borel probability measure $\mu$ on $X$ satisfying
\begin{equation*}
\mu = \frac{1}{\# I} \sum_{i \in I} \mu \circ \fii_i^{-1}.
\end{equation*}
Recall that the \emph{(lower) Hausdorff dimension} of $\mu$ is
\begin{equation*}
\dimh(\mu) = \inf\{\dimh(A):A \text{ is a Borel set such that }\mu(A)>0\}.
\end{equation*}
If $\lambda$ is the common contraction ratio of the maps $\fii_i$, then $\dimh(\mu) \le \dimh(X) \le \dims(\Phi) = -\log \#I/\log |\lambda|$ regardless of the translations.

\begin{proposition} \label{thm:main1}
	If $\Phi = (\fii_i)_{i \in I}$ is a homogeneous tuple of contractive similitudes acting on the real line such that $\lambda$ with $0<|\lambda|<1/\# I$ is the common contraction ratio of the maps $\fii_i$, $X \subset \R$ is the associated self-similar set, and $\mu$ is the natural measure on $X$, then
	\begin{equation*}
	\dimh(\mu) = \dims(\Phi) - \frac{\Lambda(\gamma)}{\log |\lambda|^{-1}}
	\end{equation*}
	for all $0<\gamma\le|\lambda|$. Furthermore, the limit inferior in the definition of $\Lambda(\gamma)$ is a limit and the value of $\Lambda(\gamma)$ does not depend on the choice of $0<\gamma\le|\lambda|$.
\end{proposition}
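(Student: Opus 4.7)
The plan is to reformulate $\Lambda(\gamma)$ as a limiting entropy of the atomic measures $\nu_n := \#I^{-n}\sum_{\iii\in I^n}\delta_{\fii_\iii(0)}$ and then appeal to Hochman's entropy-dimension formula. Let $\mathcal{Q}_r$ denote the partition of $\R$ into half-open intervals of length $r$. Since $\#\{\jjj\in I^n : |\fii_\iii(0)-\fii_\jjj(0)|\le\gamma^n\} = \#I^n \cdot \nu_n(B(\fii_\iii(0),\gamma^n))$, a direct manipulation gives
\begin{equation*}
\frac{1}{n\#I^n}\sum_{\iii\in I^n}\log\#\{\jjj\in I^n : |\fii_\iii(0)-\fii_\jjj(0)|\le\gamma^n\} = \log\#I + \frac{1}{n}\int\log\nu_n(B(x,\gamma^n))\,d\nu_n(x),
\end{equation*}
and the standard comparison of ball-measures with partition-cell-measures yields $|\int\log\nu_n(B(x,r))\,d\nu_n(x) + H(\nu_n,\mathcal{Q}_r)|\le C$ uniformly in $r$ and $n$. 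Consequently, $\Lambda(\gamma) = \log\#I - \limsup_n n^{-1}H(\nu_n,\mathcal{Q}_{\gamma^n})$.

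I next invoke Hochman's theorem \cite{Hochman2014}: for homogeneous self-similar measures, $h_\mu := \lim_n n^{-1}H(\nu_n,\mathcal{Q}_{|\lambda|^n})$ exists---by the Fekete-style near-subadditivity $H(\nu_{n+m},\mathcal{Q}_{|\lambda|^{n+m}}) \le H(\nu_n,\mathcal{Q}_{|\lambda|^n}) + H(\nu_m,\mathcal{Q}_{|\lambda|^m}) + C$ arising from the factorization $\nu_{n+m} = \#I^{-n}\sum_\iii(\fii_\iii)_*\nu_m$---and $\dimh\mu = \min(1,h_\mu/\log|\lambda|^{-1})$. The hypothesis $|\lambda|<1/\#I$ forces $h_\mu\le\log\#I<\log|\lambda|^{-1}$, eliminating the minimum, and specialising the first paragraph to $\gamma=|\lambda|$ yields $\dimh\mu = \dims(\Phi) - \Lambda(|\lambda|)/\log|\lambda|^{-1}$ together with the fact that $\Lambda(|\lambda|)$ is a genuine limit. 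This settles the proposition for $\gamma=|\lambda|$.

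The remaining content---and the main obstacle---is the scale-invariance $\lim_n n^{-1}H(\nu_n,\mathcal{Q}_{\gamma^n}) = h_\mu$ for every $\gamma\in(0,|\lambda|]$. Monotonicity of entropy under refinement yields $\liminf_n n^{-1}H(\nu_n,\mathcal{Q}_{\gamma^n})\ge h_\mu$ at once; the reverse is delicate, since the naive conditional-entropy bound only gives $\limsup_n n^{-1}H(\nu_n,\mathcal{Q}_{\gamma^n})\le h_\mu+\log(|\lambda|/\gamma)$, which is not tight. My plan is to set $k_n = \lceil n(\log\gamma-\log|\lambda|)/\log|\lambda|\rceil$ so that $|\lambda|^{n+k_n}\asymp\gamma^n$, and to compare $\nu_n$ at scale $\gamma^n$ with $\nu_{n+k_n}$ at its natural scale $|\lambda|^{n+k_n}$ via the factorization $\nu_{n+k_n} = \#I^{-n}\sum_\iii(\fii_\iii)_*\nu_{k_n}$: each atom of $\nu_n$ sits at the centre of a blob of $\nu_{n+k_n}$-mass exactly $\#I^{-n}$ which is a scaled copy of $\nu_{k_n}$. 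Concavity of entropy combined with Hochman's near-subadditivity should translate into a bound of the form $H(\nu_n,\mathcal{Q}_{\gamma^n}) \le H(\nu_{n+k_n},\mathcal{Q}_{|\lambda|^{n+k_n}}) - H(\nu_{k_n},\mathcal{Q}_{|\lambda|^{k_n}}) + O(1)$; dividing by $n$, using $k_n/n\to(\log\gamma/\log|\lambda|)-1$, and applying the Hochman limit to both right-hand terms cancels the excess and leaves $h_\mu$. The hard part will be making this blob-counting rigorous in the presence of possible blob overlaps; once completed, the scale-invariance immediately upgrades the formula to all $\gamma\in(0,|\lambda|]$ and forces $\Lambda(\gamma)$ to be constant on that interval.
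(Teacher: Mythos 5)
Your first paragraph is correct and is in fact exactly how the paper finishes: $\Lambda(\gamma)=\log\#I-\limsup_n n^{-1}H(\nu_n,\mathcal{Q}_{\gamma^n})$ up to the usual $O(1)$ comparison between balls and partition cells. The trouble starts with the two appeals to ``Hochman's theorem''. The formula $\dimh\mu=\min(1,h/\log|\lambda|^{-1})$ in Hochman's paper is stated for the symbolic (random-walk) entropy and is proved \emph{under exponential separation}, so quoting it in that form here would be circular; with your $h_\mu=\lim_n n^{-1}H(\nu_n,\mathcal{Q}_{|\lambda|^n})$ the identity $\dimh\mu=h_\mu/\log|\lambda|^{-1}$ is true, but it requires exact dimensionality of $\mu$ (Feng--Hu) together with Young's theorem identifying Hausdorff and entropy dimension, plus an $O(1)$ comparison of $\mu$ and $\nu_n$ at scale $|\lambda|^n$ -- these are precisely the inputs the paper's proof assembles and which your sketch omits. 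Moreover, your Fekete argument for the existence of $h_\mu$ is not justified: from $\nu_{n+m}=\#I^{-n}\sum_{\iii}(\fii_\iii)_*\nu_m$, knowing the $\mathcal{Q}_{|\lambda|^n}$-cell of $\fii_\iii(0)$ and the $\mathcal{Q}_{|\lambda|^m}$-cell of $\fii_\jjj(0)$ locates $\fii_{\iii\jjj}(0)$ only to within $\sim|\lambda|^n$, not $|\lambda|^{n+m}$, so the conditioning argument yields $H(\nu_{n+m},\mathcal{Q}_{|\lambda|^{n+m}})\le H(\nu_n,\mathcal{Q}_{|\lambda|^{n+m}})+H(\nu_m,\mathcal{Q}_{|\lambda|^m})+C$ with the first entropy at the \emph{fine} scale, which is exactly the quantity you do not control. (In the paper the existence of the limit comes for free from exact dimensionality.)

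The decisive gap is the step you yourself flag as the ``hard part''. The inequality you plan to prove, $H(\nu_n,\mathcal{Q}_{\gamma^n})\le H(\nu_{n+k_n},\mathcal{Q}_{|\lambda|^{n+k_n}})-H(\nu_{k_n},\mathcal{Q}_{|\lambda|^{k_n}})+O(1)$, is a \emph{super}additivity statement for the convolution-type measure $\nu_{n+k}=\#I^{-n}\sum_\iii(\fii_\iii)_*\nu_k$, and concavity or almost-convexity of entropy cannot deliver it: when the translated blobs overlap -- which is the very phenomenon under study -- the entropy of $\nu_{n+k}$ at scale $|\lambda|^{n+k}$ can a priori be far smaller than the fine-scale entropy of $\nu_n$ plus the entropy of $\nu_k$, because entropy genuinely collapses under overlaps; nothing in the blob-counting sketch excludes this at finite $n$ with an $O(1)$ error. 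The statement is only true asymptotically, and it is precisely the content of Hochman's Theorem~1.3 (no entropy growth between scales $2^{-n}$ and $2^{-qn}$ for the discretizations $\mu^{r(n)}$), which is the nontrivial external input the paper invokes at exactly this point; combined with monotonicity of the count in $\gamma$ (to interpolate between the values $\gamma=|\lambda|^q$) this gives the independence of $\Lambda(\gamma)$ on $(0,|\lambda|]$. To complete your argument you should replace the blob-counting plan by an appeal to that theorem, as the paper does -- carrying out your elementary route would essentially amount to reproving it.
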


Finally, Theorem \ref{thm:main2} and Proposition \ref{thm:main1} introduce a possible way to disprove the dimension drop conjecture: If there exist $(\lambda,t) \in \CC \setminus \EE$ and $0 < \gamma \le \lambda$ such that
\begin{equation*}
  \Lambda^{\lambda,t}(\gamma) = \liminf_{n\to\infty} \frac{1}{n} \sum_{\iii \in I^n} \frac{1}{\# I^n} \log\#\{\jjj\in I^n : |\fii_\iii^{\lambda,t}(0)-\fii_\jjj^{\lambda,t}(0)|\le\gamma^n\} > 0,
\end{equation*}
then the dimension of the natural measure drops even though there are no exact overlaps.


\section{Super-exponential condensation}

In this section, we prove Theorem \ref{thm:main2}. Let us first observe that, for the parametrized tuple $\Phi_{\lambda,t}$, the canonical projection $\pi_{\lambda,t} \colon \{1,2,3\}^\N \to X_{\lambda,t}$ satisfies
\begin{equation*}
  \pi_{\lambda,t}(\iii)=\sum_{k=1}^\infty(\delta_{i_k}^3+t\delta_{i_k}^2)\lambda^{k-1}
\end{equation*}
for all $\iii = i_1i_2\cdots \in\{1,2,3\}^\N$, where
\begin{equation*}
\delta_i^j =
  \begin{cases}
    1, & \mbox{if } i=j, \\
    0, & \mbox{if } i \ne j.
  \end{cases}
\end{equation*}
Note that $\varphi_{\iii}^{\lambda,t}(0)=\pi_{\lambda,t}(\iii1^\infty)=\sum_{k=1}^n(\delta_{i_k}^3+t\delta_{i_k}^2)\lambda^{k-1}$ for all $\iii \in \{1,2,3\}^n$ and $n \in \N$, where $1^\infty$ is the infinite sequence containing only $1$'s.

\begin{lemma}\label{lem:changesys}
  Let $n \in \N$, $\iii=i_1\cdots i_n,\jjj=j_1\cdots j_n \in \{1,2,3\}^n$ be such that $i_1 \ne j_1$, and $0<\eps<\tfrac12$. Then
  \begin{equation*}
    |\varphi^{\lambda,t}_{\iii}(0)-\varphi^{\lambda,t}_{\jjj}(0)|<\varepsilon \quad\implies\quad
    \Biggl|t-\frac{\sum_{k=1}^{n}(\delta_{j_k}^3-\delta_{i_k}^3)\lambda^{k-1}}{\sum_{k=1}^{n}(\delta_{i_k}^2-\delta_{j_k}^2)\lambda^{k-1}}\Biggr|<2\varepsilon
  \end{equation*}
  and
  \begin{equation*}
    \Biggl|t-\frac{\sum_{k=1}^{n}(\delta_{j_k}^3-\delta_{i_k}^3)\lambda^{k-1}}{\sum_{k=1}^{n}(\delta_{i_k}^2-\delta_{j_k}^2)\lambda^{k-1}}\Biggr|<\varepsilon \quad\implies\quad
    |\varphi^{\lambda,t}_{\iii}(0)-\varphi^{\lambda,t}_{\jjj}(0)|<2\varepsilon.
  \end{equation*}
\end{lemma}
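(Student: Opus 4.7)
My plan is first to expand the difference $\varphi_\iii^{\lambda,t}(0) - \varphi_\jjj^{\lambda,t}(0)$ using the explicit formula $\varphi_\iii^{\lambda,t}(0) = \sum_{k=1}^n (\delta_{i_k}^3 + t\delta_{i_k}^2)\lambda^{k-1}$ displayed just above the lemma. Writing $A = \sum_{k=1}^n (\delta_{j_k}^3 - \delta_{i_k}^3)\lambda^{k-1}$ and $B = \sum_{k=1}^n (\delta_{i_k}^2 - \delta_{j_k}^2)\lambda^{k-1}$, this yields the identity $\varphi_\iii^{\lambda,t}(0) - \varphi_\jjj^{\lambda,t}(0) = tB - A$. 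With this identity in hand, both implications become elementary consequences of two-sided bounds on $|B|$: the first follows from $|B| > 1/2$ via $|t - A/B| = |tB - A|/|B|$, and the second from $|B| \le 2$ via $|tB - A| = |B|\,|t - A/B|$.

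The upper bound $|B| \le 2$ is immediate, since $|B| \le \sum_{k=1}^n \lambda^{k-1} < 1/(1-\lambda) < 3/2$ using $\lambda < 1/3$. For the lower bound I would split on whether $2 \in \{i_1, j_1\}$. If yes, then $|\delta_{i_1}^2 - \delta_{j_1}^2| = 1$, and bounding the tail by a geometric series gives $|B| \ge 1 - \lambda/(1-\lambda) > 1/2$, again from $\lambda < 1/3$, so both implications follow at once.

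The delicate case is $\{i_1, j_1\} = \{1,3\}$, where the first term of $B$ vanishes and the lower bound $|B| > 1/2$ can fail. Here I would instead argue that both hypotheses are vacuously violated. Since $\lambda < 1/3$ forces $\fii_1^{\lambda,t}(\conv X_{\lambda,t})$ and $\fii_3^{\lambda,t}(\conv X_{\lambda,t})$ to be disjoint with a gap of at least $(1-2\lambda)/(1-\lambda) > 1/2$ (a direct computation of endpoints), the images $\fii_\iii^{\lambda,t}(0)$ and $\fii_\jjj^{\lambda,t}(0)$ lie in these two disjoint cylinders, and therefore $|\fii_\iii^{\lambda,t}(0) - \fii_\jjj^{\lambda,t}(0)| > 1/2 > \varepsilon$; this kills the first hypothesis. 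For the second, the same regime gives $|A| \ge 1 - \lambda/(1-\lambda) > 1/2$ and $|B| \le \lambda/(1-\lambda) < 1/2$, so $|A/B| > 1$; combined with $t < \lambda/(1-\lambda) < 1/2$ this yields $|t - A/B| \ge |A/B| - t > 1/2 > \varepsilon$, killing the second hypothesis.

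The main obstacle is precisely this delicate subcase: the clean argument through $|B| > 1/2$ does not apply, and one has to exploit the separation of the two outer first-level cylinders built into $\lambda < 1/3$ (together with the constraint $t < \lambda/(1-\lambda)$) to dismiss both implications as vacuous. Everything else is a mechanical manipulation of the identity $\varphi_\iii^{\lambda,t}(0) - \varphi_\jjj^{\lambda,t}(0) = tB - A$.
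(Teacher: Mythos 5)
Your proof is correct and takes essentially the same route as the paper: the identity $\varphi^{\lambda,t}_{\iii}(0)-\varphi^{\lambda,t}_{\jjj}(0)=tB-A$, the geometric-series bound $|B|\le(1-\lambda)^{-1}<\tfrac32$, and the observation that the gap $(1-2\lambda)/(1-\lambda)>\tfrac12$ between the first-level cylinders of $\varphi_1$ and $\varphi_3$ forces a $2$ in the first position, which yields $|B|>\tfrac12$. The only (harmless) difference is organizational: since the second implication needs only the upper bound on $|B|$, your extra argument that its hypothesis is vacuous when $\{i_1,j_1\}=\{1,3\}$ (via $|A/B|>1>t+\varepsilon$) is correct but unnecessary.
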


\begin{proof}
  Since
  $$
    \varphi^{\lambda,t}_{\iii}(0)-\varphi^{\lambda,t}_{\jjj}(0)=\sum_{k=1}^{n}(\delta_{i_k}^3-\delta_{j_k}^3)\lambda^{k-1}+t\sum_{k=1}^{n}(\delta_{i_k}^2-\delta_{j_k}^2)\lambda^{k-1},
  $$
  we see that both claims follow if we can show that
  \begin{equation*}
    \tfrac12 \le \Biggl| \sum_{k=1}^n (\delta_{i_k}^2-\delta_{j_k}^2)\lambda^{k-1} \Biggr| \le 2.
  \end{equation*}
  The lower bound is needed in the first claim and the upper bound in the second claim. To show the lower bound, we may thus assume that $|\varphi^{\lambda,t}_{\iii}(0)-\varphi^{\lambda,t}_{\jjj}(0)|<\tfrac12$. Since $0 < \lambda < \tfrac13$, this is possible only if $|\delta_{i_1}^2-\delta_{j_1}^2|=1$. Indeed, the distance of $\varphi_1([0,(1-\lambda)^{-1}])$ and $\varphi_3([0,(1-\lambda)^{-1}])$ is $(1-2\lambda)/(1-\lambda)>\frac12$ and so at least one of the first symbols must be a 2. Therefore, $|\sum_{k=1}^n (\delta_{i_k}^2-\delta_{j_k}^2)\lambda^{k-1}| \ge 1-\sum_{k=1}^\infty \lambda^k = (1-2\lambda)/(1-\lambda) > \tfrac12$ as claimed. The upper bound is trivial since $|\sum_{k=1}^n (\delta_{i_k}^2-\delta_{j_k}^2)\lambda^{k-1}| \le \sum_{k=0}^\infty \lambda^k = (1-\lambda)^{-1} < \tfrac32$.
\end{proof}

Lemma~\ref{lem:changesys} tells us that in order to achieve super-exponential condensation, the parameter $t$ must be contained in a super-exponential neighbourhood of a ratio of the form
\begin{equation*}
  \frac{\sum_{k=1}^{n}(\delta_{j_k}^3-\delta_{i_k}^3)\lambda^{k-1}}{\sum_{k=1}^{n}(\delta_{i_k}^2-\delta_{j_k}^2)\lambda^{k-1}}.
\end{equation*}
We shall show that such ratios are certain non-linear projections of an induced self-similar set in the plane.

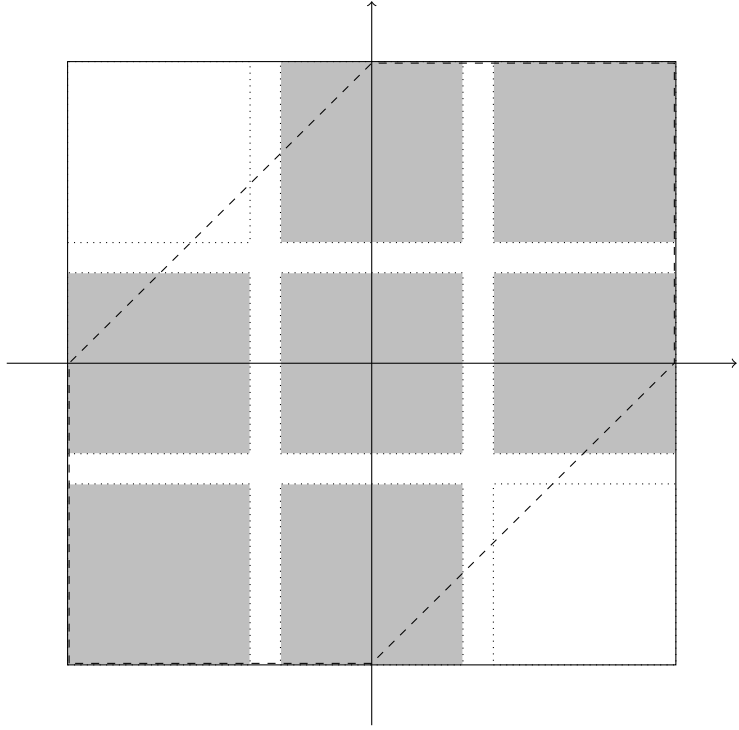
\begin{figure}[t]
	\begin{tikzpicture}[scale=0.4]
	\draw [dotted] (0,14) rectangle (6,20);
	
	\path[draw=white,fill=lightgray] (7,14) rectangle (13,20);
	\draw [dotted] (7,14) rectangle (13,20);
	
	\path[draw=white,fill=lightgray] (14,14) rectangle (20,20);
	\draw [dotted] (14,14) rectangle (20,20);
	
	\path[draw=white,fill=lightgray] (0,7) rectangle (6,13);
	\draw [dotted] (0,7) rectangle (6,13);
	
	\path[draw=white,fill=lightgray] (7,7) rectangle (13,13);
	\draw [dotted] (7,7) rectangle (13,13);
	
	\path[draw=white,fill=lightgray] (14,7) rectangle (20,13);
	\draw [dotted] (14,7) rectangle (20,13);
	
	\path[draw=white,fill=lightgray] (0,0) rectangle (6,6);
	\draw [dotted] (0,0) rectangle (6,6);
	
	\path[draw=white,fill=lightgray] (7,0) rectangle (13,6);
	\draw [dotted] (7,0) rectangle (13,6);
	
	\draw [dotted] (14,0) rectangle (20,6);
	
	\draw (0,0) -- (20,0) -- (20,20) -- (0,20) -- cycle;
	
	\draw [dashed] (0.05,0.05) -- (0.05,10) -- (10,19.95) -- (19.95,19.95) -- (19.95,10) -- (10,0.05) -- cycle;
	
	\draw [->] (-2,10) -- (22,10) node [] {};
	\draw [->] (10,-2) -- (10,22) node [] {};
	\end{tikzpicture}
	\caption{Illustration for the self-similar set $K_\lambda$ and the convex hull $P_\lambda$.}
	\label{fig:illustration}
\end{figure}

Let $J = \{(0,0),(-1,0),(-1,-1),(1,0),(1,1),(0,-1),(0,1)\}$ and define $S^\lambda_{(i,j)} \colon \R^2 \to \R^2$ by setting
\begin{equation*}
  S^\lambda_{(i,j)}(x,y) = (\lambda x+i, \lambda y+j)
\end{equation*}
for all $(x,y) \in \R^2$ and $(i,j) \in J$. Write $\Psi_\lambda = (S^\lambda_{(i,j)})_{(i,j) \in J}$ and let $K_\lambda \subset \R^2$ be the associated self-similar set; see Figure~\ref{fig:illustration} for an illustration. The map $\beta \colon \{1,2,3\} \times \{1,2,3\} \to J$ defined by setting $\beta(i,j) = (\delta_{j}^3-\delta_{i}^3,\delta_{i}^2-\delta_{j}^2)$ is clearly one-to-one outside the diagonal. We extend the map $\beta$ to $\{1,2,3\}^n \times \{1,2,3\}^n \to J^n$ for all $n \in \N$ and to $\{1,2,3\}^\N \times \{1,2,3\}^\N \to J^\N$ in a natural way: for example, if $\iii=i_1\cdots i_n$ and $\jjj=j_1\cdots j_n$ for some $n \in \N$, then $\beta(\iii,\jjj)$ is defined to be $\beta(i_1,j_1)\cdots\beta(i_n,j_n)$. Finally, we define a non-linear projection $\proj \colon \{(x,y)\in\R^2:y\ne 0\} \to \R$ by setting
\begin{equation*}
  \proj(x,y) = \frac{x}{y}
\end{equation*}
for all $x \in \R$ and $y \in \R \setminus \{0\}$. The following lemma basically restates Lemma \ref{lem:changesys} in terms of the projection. If $\iii=i_1\cdots i_n$ and $\jjj$ are finite sequences, then we write $\iii\land\jjj$ for their common beginning and $\sigma(\iii)=i_2\cdots i_n$.

\begin{lemma}\label{lem:changesys2}
  Let $n \in \N$, $\iii,\jjj \in \{1,2,3\}^n$ be such that $\iii \ne \jjj$, and $0<\eps<\tfrac12\lambda^{|\iii\land\jjj|}$. Then
  \begin{equation*}
    |\varphi^{\lambda,t}_{\iii}(0)-\varphi^{\lambda,t}_{\jjj}(0)|<\varepsilon \quad\implies\quad
    |t-\proj(S^\lambda_{\beta(\iii,\jjj)}(0,0))|<2\lambda^{-|\iii\land\jjj|}\varepsilon
  \end{equation*}
  and
  \begin{equation*}
    |t-\proj(S^\lambda_{\beta(\iii,\jjj)}(0,0))|<\lambda^{-|\iii\land\jjj|}\varepsilon \quad\implies\quad
    |\varphi^{\lambda,t}_{\iii}(0)-\varphi^{\lambda,t}_{\jjj}(0)|<2\varepsilon.
  \end{equation*}
\end{lemma}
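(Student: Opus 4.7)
The plan is to reduce Lemma~\ref{lem:changesys2} to the already-proved Lemma~\ref{lem:changesys} by stripping off the common prefix of $\iii$ and $\jjj$, and then to verify that the non-linear projection $\proj(S^\lambda_{\beta(\iii,\jjj)}(0,0))$ is exactly the ratio appearing in the conclusion of Lemma~\ref{lem:changesys} applied to the tails.

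First I would set $m=|\iii\land\jjj|$ and write $\iii = \hhh\iii'$, $\jjj = \hhh\jjj'$ where $|\hhh|=m$ and $\iii',\jjj'$ have distinct first symbols. Since $\fii_\hhh^{\lambda,t}$ is a similitude with contraction ratio $\lambda^m$, one obtains
\begin{equation*}
  \fii^{\lambda,t}_\iii(0) - \fii^{\lambda,t}_\jjj(0) = \lambda^m\bigl(\fii^{\lambda,t}_{\iii'}(0) - \fii^{\lambda,t}_{\jjj'}(0)\bigr),
\end{equation*}
so the assumption $|\fii^{\lambda,t}_\iii(0)-\fii^{\lambda,t}_\jjj(0)|<\eps$ is equivalent to $|\fii^{\lambda,t}_{\iii'}(0)-\fii^{\lambda,t}_{\jjj'}(0)|<\lambda^{-m}\eps$, and the hypothesis $\eps<\tfrac12\lambda^m$ translates into $\lambda^{-m}\eps<\tfrac12$. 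This places us in exactly the setting of Lemma~\ref{lem:changesys} applied to the pair $(\iii',\jjj')$ with the new threshold $\lambda^{-m}\eps$; the factor $\lambda^{-m}$ in the right-hand sides of the two implications in Lemma~\ref{lem:changesys2} is precisely this rescaling.

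Next I would carry out the direct computation of $S^\lambda_{\beta(\iii,\jjj)}(0,0)$. Writing $\beta(i_k,j_k)=(a_k,b_k)$ and composing in the canonical way, one gets
\begin{equation*}
  S^\lambda_{\beta(\iii,\jjj)}(0,0) = \Biggl(\sum_{k=1}^{n}a_k\lambda^{k-1},\ \sum_{k=1}^{n}b_k\lambda^{k-1}\Biggr),
\end{equation*}
with $a_k=\delta^3_{j_k}-\delta^3_{i_k}$ and $b_k=\delta^2_{i_k}-\delta^2_{j_k}$. Since $i_k=j_k$ for $k\le m$ forces $a_k=b_k=0$, both coordinates can be reindexed to start at $k=m+1$; pulling out the common factor $\lambda^m$ from numerator and denominator and relabelling with the tails $\iii',\jjj'$ yields
\begin{equation*}
  \proj(S^\lambda_{\beta(\iii,\jjj)}(0,0)) = \frac{\sum_{k=1}^{n-m}(\delta^3_{j'_k}-\delta^3_{i'_k})\lambda^{k-1}}{\sum_{k=1}^{n-m}(\delta^2_{i'_k}-\delta^2_{j'_k})\lambda^{k-1}},
\end{equation*}
which is precisely the ratio in Lemma~\ref{lem:changesys}. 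Note that the denominator is non-zero by the lower bound established inside the proof of Lemma~\ref{lem:changesys}, so $\proj$ is well defined here.

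Combining these two pieces yields both implications. In the forward direction, the hypothesis rescales to $|\fii^{\lambda,t}_{\iii'}(0)-\fii^{\lambda,t}_{\jjj'}(0)|<\lambda^{-m}\eps<\tfrac12$, and Lemma~\ref{lem:changesys} gives $|t-\proj(S^\lambda_{\beta(\iii,\jjj)}(0,0))|<2\lambda^{-m}\eps$; the reverse direction runs the same argument with the two bounds exchanged. There is essentially no hard step: the main point is to realize that the planar IFS $\Psi_\lambda$ and the coding map $\beta$ were designed exactly so that the ratio in Lemma~\ref{lem:changesys} is reproduced as a coordinate quotient. The only bookkeeping subtlety is tracking the factor $\lambda^m$ correctly when passing between the original pair and the reduced pair $(\iii',\jjj')$, which is why the hypothesis $\eps<\tfrac12\lambda^{|\iii\land\jjj|}$ appears with this specific power.
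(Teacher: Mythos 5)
Your proposal is correct and follows essentially the same route as the paper: strip the common prefix $\iii\land\jjj$, note that $\proj(S^\lambda_{\beta(\iii,\jjj)}(0,0))$ is unchanged under this stripping (the prefix contributes only $(0,0)$ symbols, and the common factor $\lambda^{|\iii\land\jjj|}$ cancels in the quotient), rescale the distance $|\fii^{\lambda,t}_\iii(0)-\fii^{\lambda,t}_\jjj(0)|$ by $\lambda^{|\iii\land\jjj|}$, and invoke Lemma~\ref{lem:changesys} on the tails. The paper's proof is just a terser version of exactly this argument, so no further comment is needed.
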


\begin{proof}
  Note that $\proj(S^\lambda_{\beta(\iii,\jjj)}(0,0))=\proj(S^\lambda_{\beta(\sigma^{|\iii\wedge\jjj|}(\iii),\sigma^{|\iii\wedge\jjj|}(\jjj))}(0,0))$ whenever $\iii\neq\jjj$. Therefore, as $|\varphi^{\lambda,t}_\iii(0)-\varphi^{\lambda,t}_\jjj(0)|=\lambda^{|\iii\wedge\jjj|}|\varphi^{\lambda,t}_{\sigma^{\iii\wedge\jjj}(\iii)}(0)-\varphi^{\lambda,t}_{\sigma^{\iii\wedge\jjj}(\jjj)}(0)|$, the proof follows from Lemma~\ref{lem:changesys}.
\end{proof}

We have thus transformed the problem to a study of non-linear projections. Note that, for every finite sequence $\kkk$ containing a symbol with non-zero second coordinate, we have
\begin{equation*}
  \proj(S_\kkk^\lambda(0,0)) = \lambda^m\frac{\pm1+\sum_{\ell=1}^{k}\alpha_{\ell,1}\lambda^k}{\pm1+\sum_{\ell=1}^{n}\alpha_{\ell,2}\lambda^k},
\end{equation*}
where $m$ is the difference of the positions of the appearance of the first non-zero first coordinate and first non-zero second coordinate and $\alpha_{i,j}\in\{-1,0,1\}$. Hence,
\begin{align}
  \lambda_0^m(1-2\lambda_0) \leq |\proj(S_\kkk^{\lambda_0}(0,0))| &\leq \frac{\lambda_0^m}{1-2\lambda_0}, \label{eq:genbound1} \\
  \bigl|\tfrac{\mathrm{d}}{\mathrm{d}\lambda}\proj(S_\kkk^\lambda(0,0))\big|_{\lambda=\lambda_0}\bigr| &\leq \frac{2|m|\lambda_0^{m-1}}{(1-\lambda_0)(1-2\lambda_0)} \label{eq:genbound2}
\end{align}
for all $\tfrac14<\lambda<\tfrac13$.

Our first concrete goal now is to find finite words $\kkk$ for which $\proj(S_\kkk^{\lambda}(K_\lambda))$ is an interval for a range of $\lambda$'s. After some preliminary lemmas we will achieve this in Lemma \ref{prop:projint}. Let
$$
  P_\lambda=\mathrm{conv}\biggl(\biggl\{\biggl(\frac{i}{1-\lambda},\frac{j}{1-\lambda}\biggr)\biggr\}_{(i,j)\in J}\biggr).
$$
Since each extremal point of $P_\lambda$ is the fixed point of some map $S_{(i,j)}^\lambda$ and all the maps $S_{(i,j)}^\lambda$ are direction preserving homotethies with $S_{(i,j)}^\lambda(P_\lambda) \subset P_\lambda$, we see that $K_\lambda \subset P_\lambda$. See again Figure~\ref{fig:illustration} for an illustration.

\begin{lemma}\label{lem:projhull}
  If $(a,b)\in S^\lambda_{(0,1)}(P_\lambda)\cap(0,\infty)\times\R$ and $(0,0)^n = (0,0)\cdots(0,0) \in J^n$ for all $n \in \N$, then
  $$
    \proj(S^\lambda_{(0,0)^n}(P_\lambda)+(a,b))=\Biggl[\frac{a-\frac{\lambda^n}{1-\lambda}}{b},\frac{a+\frac{\lambda^n}{1-\lambda}}{b}\Biggr]
  $$
  for all $n \in \N$ with $a\geq \frac{\lambda^n}{1-\lambda}$.
\end{lemma}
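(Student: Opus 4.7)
The plan is a direct geometric computation. Setting $\rho := \lambda^n/(1-\lambda)$, the set $S^\lambda_{(0,0)^n}(P_\lambda)+(a,b)$ is the convex hexagon with vertices
\begin{equation*}
(a\pm\rho,b),\quad (a,b\pm\rho),\quad (a+\rho,b+\rho),\quad (a-\rho,b-\rho).
\end{equation*}
The hypothesis $a\geq\rho$ ensures that the first coordinate is nonnegative throughout this hexagon.

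The key observation is that the geometric constraint $(a,b)\in S^\lambda_{(0,1)}(P_\lambda)\cap (0,\infty)\times\R$ forces
\begin{equation*}
a+\rho \leq b.
\end{equation*}
Indeed, $S^\lambda_{(0,1)}(P_\lambda)$ is $\lambda P_\lambda$ translated to centre $(0,1)$, and its right half ($a>0$) is bounded below either by $b\geq 1$ or by the slope-$1$ edge $b=a+(1-2\lambda)/(1-\lambda)$. In the first case $a+\rho \leq 2\lambda/(1-\lambda)<1\leq b$ because $\lambda<1/3$; in the second case $a+\rho\leq b-(1-3\lambda)/(1-\lambda)\leq b$, again because $\lambda<1/3$. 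The same bound also yields $b-\rho>0$, so $y>0$ everywhere on the hexagon, $\proj(x,y)=x/y$ is continuous on it, and its image is therefore a connected subset of $\R$, i.e., an interval.

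It then remains to identify the endpoints of this interval. A short parametrization of each of the six edges shows, using $a+\rho\leq b$ and $a\geq\rho$, that $x/y$ is monotone along every edge; hence its extreme values are attained at the six vertices. Plugging these six values in and comparing pairwise with elementary arithmetic (repeatedly invoking $a+\rho\leq b$) one finds that the maximum $(a+\rho)/b$ is attained at the right vertex $(a+\rho,b)$ and the minimum $(a-\rho)/b$ at the left vertex $(a-\rho,b)$. This identifies the image of $\proj$ as the stated interval.

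The only real obstacle is the geometric inequality $a+\rho\leq b$; once this is in hand, the rest is a routine bookkeeping of six monotone functions on six line segments.
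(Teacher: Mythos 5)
Your proof is correct and takes essentially the same route as the paper's: both arguments come down to elementary quotient inequalities whose engine is the geometric fact that $b-a\ge\frac{1-2\lambda}{1-\lambda}>\frac{\lambda^n}{1-\lambda}$ (together with $a\ge\frac{\lambda^n}{1-\lambda}$ and $\lambda<\tfrac13$) for $(a,b)$ in the right half of $S^\lambda_{(0,1)}(P_\lambda)$. The only difference is organizational: you locate the extrema of $x/y$ by monotonicity along edges and enumeration of the six vertices of the translated hexagon, whereas the paper compares a general point with the endpoints $\bigl(a\pm\frac{\lambda^n}{1-\lambda}\bigr)/b$ via a monotone one-parameter deformation; the resulting finite checks are the same.
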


\begin{proof}
  Observe first that $S^\lambda_{(0,1)}(P_\lambda)$ lies strictly above the $x$-axis: if $(a,b)\in S^\lambda_{(0,1)}(P_\lambda)$, then $b>\frac{1-2\lambda}{1-\lambda}>\frac12$. Since the line segment connecting $(a-\frac{\lambda^n}{1-\lambda},b)$ and $(a+\frac{\lambda^n}{1-\lambda},b)$ is contained in $S^\lambda_{(0,0)^n}(P_\lambda)+(a,b)$, the projection contains the claimed interval. Let us show that
  \begin{equation*}
    \frac{a-\frac{\lambda^n}{1-\lambda}}{b}\leq\frac{x}{y}
  \end{equation*}
  for all $(x,y)\in S^\lambda_{(0,0)^n}(P_\lambda)+(a,b)$. The assumption $a\geq\frac{\lambda^n}{1-\lambda}$ guarantees that $x\geq0$ for all $(x,y)\in S^\lambda_{(0,0)^n}(P_\lambda)+(a,b)$. Thus, by decreasing $x$ and increasing $y$, we see that there exists $s\in[0,1]$ such that
   \begin{equation*}
  \frac{a-s\frac{\lambda^n}{1-\lambda}}{b+(1-s)\frac{\lambda^n}{1-\lambda}}\leq\frac{x}{y}.
  \end{equation*}
  By differentiating, the function
  \begin{equation*}
    s \mapsto \frac{a-s\frac{\lambda^n}{1-\lambda}}{b+(1-s)\frac{\lambda^n}{1-\lambda}}
  \end{equation*}
  can easily be seen to be monotone on $[0,1]$. It is therefore enough to check that the endpoints satisfy
  \begin{equation*}
    \frac{a-\frac{\lambda^n}{1-\lambda}}{b}\leq\frac{a}{b+\frac{\lambda^n}{1-\lambda}}.
  \end{equation*}
  Since $0<\lambda<\tfrac13$, a simple calculation shows that this holds for every $n\in\N$.

  Let us then show that
  \begin{equation*}
    \frac{x}{y} \le \frac{a+\frac{\lambda^n}{1-\lambda}}{b}
  \end{equation*}
  for all $(x,y)\in S^\lambda_{(0,0)^n}(P_\lambda)+(a,b)$. Similarly, by increasing $x$ and decreasing $y$, we see that there exists $s\in[0,1]$ such that
  \begin{equation*}
    \frac{x}{y}\leq \frac{a+s\frac{\lambda^n}{1-\lambda}}{b-(1-s)\frac{\lambda^n}{1-\lambda}}.
  \end{equation*}
  Since the right-hand side is monotone on $[0,1]$, it suffices to check that
  \begin{equation*}
    \frac{a}{b-\frac{\lambda^n}{1-\lambda}}\leq\frac{a+\frac{\lambda^n}{1-\lambda}}{b}.
  \end{equation*}
  A simple calculation shows that this holds if and only if $\frac{\lambda^n}{1-\lambda}\le b-a$. Clearly, $b\geq a+\frac{1-2\lambda}{1-\lambda}$ for $(a,b)\in S_{(0,1)}(P_\lambda)\cap[0,\infty)\times\R$, and since $0<\lambda<\tfrac13$, we have
  \begin{equation*}
    \frac{\lambda^n}{1-\lambda}\le\frac{\lambda}{1-\lambda}<\frac{1}{2}<\frac{1-2\lambda}{1-\lambda}\le b-a.
  \end{equation*}
  The proof is finished.
\end{proof}

\begin{lemma}\label{lem:indproj}
  If $(a,b)\in S^\lambda_{(0,1)}(P_\lambda)\cap(0,\infty)\times\R$ and $N \in \N$ is such that
  \begin{equation*}
    \lambda^N\leq\min\biggl\{2a-\frac{1-3\lambda}{1-\lambda}b,\frac{2\lambda}{1-2\lambda}b-\frac{1-\lambda}{1-2\lambda}a\biggr\},
  \end{equation*}
  then
  \begin{equation*}
    \proj((a,b)+S^\lambda_{(0,0)^n}(P_\lambda))=\bigcup_{(i,j)\in J}\proj((a+\lambda^ni,b+\lambda^nj)+S^\lambda_{(0,0)^{n+1}}(P_\lambda))
  \end{equation*}
  for all $n \ge N$ with $a\geq\frac{\lambda^n}{1-\lambda}$.
\end{lemma}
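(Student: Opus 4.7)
The plan is to prove the two inclusions of the set equality separately.

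The inclusion $\supset$ is immediate: writing $(a+\lambda^n i, b+\lambda^n j) + \lambda^{n+1} P_\lambda = (a,b) + \lambda^n S^\lambda_{(i,j)}(P_\lambda)$ and using $S^\lambda_{(i,j)}(P_\lambda) \subset P_\lambda$, each summand on the right-hand side is contained in $(a,b) + \lambda^n P_\lambda$, so the inclusion passes to projections.

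For the harder inclusion $\subset$, I would first apply Lemma~\ref{lem:projhull} to the left-hand side to obtain the explicit interval $[\alpha, \beta]$ with $\alpha = (a - \tfrac{\lambda^n}{1-\lambda})/b$ and $\beta = (a + \tfrac{\lambda^n}{1-\lambda})/b$. Next, I would compute each of the seven projections $I_{i,j} := \proj((a+\lambda^n i, b+\lambda^n j) + \lambda^{n+1} P_\lambda)$ individually. The shifted centres $(a+\lambda^n i, b + \lambda^n j)$ need not lie in $S^\lambda_{(0,1)}(P_\lambda)$, so Lemma~\ref{lem:projhull} does not apply verbatim; however, its proof uses only the three geometric inputs $a' \ge \lambda^{n+1}/(1-\lambda)$, $b' > 0$, and $b' - a' \ge \lambda^{n+1}/(1-\lambda)$, each of which can be verified for every $(i,j) \in J$ from the standing hypotheses $a \ge \tfrac{\lambda^n}{1-\lambda}$, $(a,b) \in S^\lambda_{(0,1)}(P_\lambda)$, and $\lambda < \tfrac13$. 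This yields
\[
 I_{i,j} = \left[\frac{a + \lambda^n i - \tfrac{\lambda^{n+1}}{1-\lambda}}{b + \lambda^n j},\; \frac{a + \lambda^n i + \tfrac{\lambda^{n+1}}{1-\lambda}}{b + \lambda^n j}\right],
\]
and a short calculation identifies $\alpha$ as the left endpoint of $I_{(-1,0)}$ and $\beta$ as the right endpoint of $I_{(1,0)}$.

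Since each $I_{i,j} \subset [\alpha,\beta]$, it remains to exhibit a chain of consecutively overlapping intervals that starts at $I_{(-1,0)}$, ends at $I_{(1,0)}$, and visits every element of $J$. I would use the chain
\[
 I_{(-1,0)} \leftrightarrow I_{(-1,-1)} \leftrightarrow I_{(0,1)} \leftrightarrow I_{(0,0)} \leftrightarrow I_{(0,-1)} \leftrightarrow I_{(1,1)} \leftrightarrow I_{(1,0)},
\]
and for each consecutive pair cross-multiply and factor out $\lambda^n$ to reduce the overlap condition to an elementary inequality in $(a,b,\lambda,\lambda^n)$. The tight column pair $I_{(1,0)} \leftrightarrow I_{(1,1)}$ reduces exactly to $\lambda^n(1-2\lambda) \le 2\lambda b - (1-\lambda)a$, which is the second standing hypothesis for $n \ge N$, while the diagonal pairs $I_{(-1,-1)} \leftrightarrow I_{(0,1)}$ and $I_{(0,-1)} \leftrightarrow I_{(1,1)}$ reduce to $\lambda^n \le 2a - \tfrac{1-3\lambda}{1-\lambda}b$, the first standing hypothesis. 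The remaining three overlaps (the mirror column $I_{(-1,0)} \leftrightarrow I_{(-1,-1)}$ and the two central overlaps $I_{(0,0)} \leftrightarrow I_{(0,\pm 1)}$) follow from weaker consequences of the two hypotheses, using the inequality $\lambda < 1 - 2\lambda$ valid for $\lambda < \tfrac13$. The main obstacle is purely bookkeeping: the real-line ordering of certain $I_{i,j}$ depends on whether $2a$ exceeds $b-\lambda^n$, so a single pair (for instance $I_{(0,-1)}$ versus $I_{(1,1)}$) may correspond to one of two different overlap inequalities; one must check that in each regime the same two standing bounds continue to do the job, which is precisely why both hypotheses on $\lambda^N$ appear in the statement.
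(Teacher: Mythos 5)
Your proposal follows the paper's proof essentially verbatim: reduce via Lemma~\ref{lem:projhull} (whose proof, as you correctly note, applies to the shifted centres after checking its geometric inputs) to covering the interval $\bigl[\frac{a-\lambda^n/(1-\lambda)}{b},\frac{a+\lambda^n/(1-\lambda)}{b}\bigr]$ by the seven subintervals, chain them in the same order $(-1,0),(-1,-1),(0,1),(0,0),(0,-1),(1,1),(1,0)$, and verify consecutive overlaps by cross-multiplication, with the two hypotheses on $\lambda^N$ entering exactly at the pairs you identify. The only superfluous point is your closing worry about the real-line ordering of the $I_{i,j}$: since the left endpoint of the first interval is the left endpoint of the target and the right endpoint of the last is its right endpoint, it suffices to check the one-sided inequalities (left endpoint of each interval at most the right endpoint of its predecessor in the chain), so no case analysis on relative positions is needed --- this is precisely what the paper checks in \eqref{eq:l1}--\eqref{eq:l6}.
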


\begin{proof}
	Fix $(a,b)\in S^\lambda_{(0,1)}(P_\lambda)\cap(0,\infty)\times\R$ and let $n\geq N$ be such that $a\geq\frac{\lambda^n}{1-\lambda}$. Hence, $a+i\lambda^n\geq\frac{\lambda^n}{1-\lambda}-\lambda^n=\frac{\lambda^{n+1}}{1-\lambda}$. By Lemma~\ref{lem:projhull}, it is thus enough to show that
  $$
    \left[\frac{a-\frac{\lambda^n}{1-\lambda}}{b},\frac{a+\frac{\lambda^n}{1-\lambda}}{b}\right]=\bigcup_{(i,j)\in J}\left[\frac{a+i\lambda^n-\frac{\lambda^{n+1}}{1-\lambda}}{b+j\lambda^n},\frac{a+i\lambda^n+\frac{\lambda^{n+1}}{1-\lambda}}{b+j\lambda^n}\right].
  $$
  In fact, it suffices to show that the consecutive intervals in the above union have non-empty intersection. The order of the intervals corresponds to the following order in $J$:
  $$
    (-1,0),(-1,-1),(0,1),(0,0),(0,-1),(1,1),(1,0).
  $$
  Hence, we have to check that the following inequalities hold:
  \begin{align}
    \frac{a-\lambda^n-\frac{\lambda^{n+1}}{1-\lambda}}{b-\lambda^n}&\leq\frac{a-\lambda^n+\frac{\lambda^{n+1}}{1-\lambda}}{b},\label{eq:l1}\\
    \frac{a-\frac{\lambda^{n+1}}{1-\lambda}}{b+\lambda^n}&\leq\frac{a-\lambda^n+\frac{\lambda^{n+1}}{1-\lambda}}{b-\lambda^n},\label{eq:l2}\\
    \frac{a-\frac{\lambda^{n+1}}{1-\lambda}}{b}&\leq\frac{a+\frac{\lambda^{n+1}}{1-\lambda}}{b+\lambda^n},\label{eq:l3}\\
    \frac{a-\frac{\lambda^{n+1}}{1-\lambda}}{b-\lambda^n}&\leq\frac{a+\frac{\lambda^{n+1}}{1-\lambda}}{b},\label{eq:l4}\\
    \frac{a+\lambda^n-\frac{\lambda^{n+1}}{1-\lambda}}{b+\lambda^n}&\leq\frac{a+\frac{\lambda^{n+1}}{1-\lambda}}{b-\lambda^n},\label{eq:l5}\\
    \frac{a+\lambda^n-\frac{\lambda^{n+1}}{1-\lambda}}{b}&\leq\frac{a+\lambda^n+\frac{\lambda^{n+1}}{1-\lambda}}{b+\lambda^n}.\label{eq:l6}
  \end{align}
  The inequality \eqref{eq:l1} holds if and only if $a\leq \frac{2\lambda}{1-\lambda}b+\lambda^n\frac{1-2\lambda}{1-\lambda}$. Recalling that $a\leq\frac{\lambda}{1-\lambda}$ and $b\geq\frac{1-2\lambda}{1-\lambda}$, we see that this holds if $\frac{\lambda}{1-\lambda}\leq \frac{2\lambda(1-2\lambda)}{(1-\lambda)^2}$, which is true since $0<\lambda<\tfrac13$.
  The inequalities \eqref{eq:l2} and \eqref{eq:l5} hold if and only if $2a-\lambda^n\geq\frac{1-3\lambda}{1-\lambda}b$, which is true by the assumption. The inequality \eqref{eq:l3} holds if and only if $a\leq \frac{2\lambda}{1-\lambda}b+\frac{\lambda^{n+1}}{1-\lambda}$, which can be seen to hold similarly as with the inequality \eqref{eq:l1}. Finally, the inequality \eqref{eq:l4} holds if and only if $a\leq \frac{2\lambda}{1-\lambda}b-\frac{\lambda^{n+1}}{1-\lambda}$ and \eqref{eq:l6} if and only if $a\leq \frac{2\lambda}{1-\lambda}b-\frac{\lambda^{n}(1-2\lambda)}{1-\lambda}$, which are true again by the assumption.
\end{proof}

We are now ready to show that the projection of $K_\lambda$ contains an interval for a range of $\lambda$'s.

\begin{lemma}\label{prop:projint}
  If $\tfrac14<\lambda<\tfrac13$ and $\kkk \in \bigcup_{n=3}^\infty J^n$ satisfies $\kkk|_3=(0,1)(1,1)(0,-1)$ or $\kkk|_3=(0,1)(1,0)(0,1)$, then $\proj(S_\kkk^{\lambda}(K_\lambda))$ is an interval with center $\proj(S^\lambda_{\kkk}(0,0))$ and with length at least $2\lambda^{|\kkk|}$ and at most $3\lambda^{|\kkk|}$.
\end{lemma}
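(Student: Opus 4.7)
The plan is to reduce the problem to computing the projection of the polygon $P_\lambda$ under an affine shift, and then to pass from $P_\lambda$ to its attractor $K_\lambda$ via iteration of Lemma~\ref{lem:indproj}. Writing $n = |\kkk|$ and $(a,b) = S^\lambda_\kkk(0,0)$, one has $S^\lambda_\kkk(P_\lambda) = (a,b) + \lambda^n P_\lambda = (a,b) + S^\lambda_{(0,0)^n}(P_\lambda)$ and likewise $S^\lambda_\kkk(K_\lambda) = (a,b) + \lambda^n K_\lambda$. The prefix condition $\kkk|_1 = (0,1)$ forces $(a,b) \in S^\lambda_{\kkk|_3}(P_\lambda) \subset S^\lambda_{(0,1)}(P_\lambda)$, and a direct computation using either prescribed prefix gives $S^\lambda_{\kkk|_3}(0,0) \in \{(\lambda, 1+\lambda-\lambda^2),\,(\lambda, 1+\lambda^2)\}$, so that $a = \lambda + O(\lambda^3)$ is bounded away from $0$ uniformly in $\lambda \in (1/4,1/3)$, which in particular secures $a \geq \lambda^n/(1-\lambda)$. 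Lemma~\ref{lem:projhull} then yields that $\proj((a,b) + \lambda^n P_\lambda)$ is an interval centered at $a/b = \proj(S^\lambda_\kkk(0,0))$ of length $2\lambda^n/((1-\lambda)b)$. The stated length bound reduces to $(1-\lambda)b \in [2/3,1]$; writing $b = b_0 + \lambda^3 t$ with $t \in [-1/(1-\lambda),1/(1-\lambda)]$ and $b_0 \in \{1+\lambda-\lambda^2,\,1+\lambda^2\}$, a short numerical estimation confirms this throughout $(1/4,1/3)$.

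The heart of the argument is to upgrade this to the equality $\proj((a,b) + \lambda^n P_\lambda) = \proj((a,b) + \lambda^n K_\lambda)$; the inclusion $\supset$ is immediate from $K_\lambda \subset P_\lambda$. For the reverse inclusion I would iterate Lemma~\ref{lem:indproj}, obtaining for every $k \geq 0$ the identity
\begin{equation*}
\proj((a,b) + \lambda^n P_\lambda) = \bigcup_{\iii \in J^k} \proj\bigl((a,b) + \lambda^n S^\lambda_\iii(P_\lambda)\bigr).
\end{equation*}
I expect the main obstacle to be verifying the hypotheses of Lemma~\ref{lem:indproj} uniformly at each iterated base point $(a_\iii, b_\iii) := (a,b) + \lambda^n S^\lambda_\iii(0,0)$. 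All such points lie inside $(a,b) + \lambda^n P_\lambda \subset S^\lambda_{\kkk|_3}(P_\lambda)$, a set already shown to sit strictly inside $S^\lambda_{(0,1)}(P_\lambda) \cap (0,\infty) \times \R$; and direct estimation using the explicit form of $b_0$ shows that $\min\{2a_\iii - \tfrac{1-3\lambda}{1-\lambda}b_\iii,\,\tfrac{2\lambda}{1-2\lambda}b_\iii - \tfrac{1-\lambda}{1-2\lambda}a_\iii\}$ is bounded below by a positive constant (depending only on $\lambda$) that dominates $\lambda^{n+k} \leq \lambda^3$ for all $\lambda \in (1/4,1/3)$. The remaining condition $a_\iii \geq \lambda^{n+k}/(1-\lambda)$ is trivial since $a_\iii \geq \lambda/2$ and $\lambda^{n+k}$ is much smaller.

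To conclude, each center $\proj((a,b) + \lambda^n S^\lambda_\iii(0,0))$ of the intervals appearing in the decomposition lies in $\proj((a,b) + \lambda^n K_\lambda)$, since $(0,0) \in K_\lambda$ as the fixed point of $S^\lambda_{(0,0)}$ forces $S^\lambda_\iii(0,0) \in K_\lambda$. Since each subinterval has length $O(\lambda^{n+k})$, these centers form a dense subset of the full interval as $k \to \infty$, and compactness of $\proj((a,b) + \lambda^n K_\lambda)$ finally delivers the required reverse inclusion, completing the proof.
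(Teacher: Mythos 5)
Your proof is correct and follows essentially the same route as the paper: you reduce everything to the identity $\proj(S^\lambda_\kkk(K_\lambda))=\proj(S^\lambda_\kkk(P_\lambda))$ by iterating Lemma~\ref{lem:indproj} at the cylinder midpoints $(a_\iii,b_\iii)$, using the same coordinate ranges coming from the prescribed prefixes and the same numerical verification, and then read off the center and the length bounds from Lemma~\ref{lem:projhull}. The only cosmetic difference is the final passage from the iterated union of projected cylinders to $K_\lambda$: the paper intersects the nested unions, while you use density of the cylinder centers $\proj(S^\lambda_{\kkk\iii}(0,0))\in\proj(S^\lambda_\kkk(K_\lambda))$ together with compactness of $\proj(S^\lambda_\kkk(K_\lambda))$ — an equivalent argument.
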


\begin{proof} 
	The claim follows if it holds that $\proj(S^\lambda_\kkk(K_\lambda))=\proj(S^\lambda_\kkk(P_\lambda))$. For that, it is enough to show that
  \begin{equation}\label{eq:enough}
    \proj(S^\lambda_{\kkk\jjj}(P_\lambda))=\bigcup_{(i,j)\in J}\proj(S^\lambda_{\kkk\jjj(i,j)}(P_\lambda)).
  \end{equation}
  for every finite sequence $\jjj$. Indeed, if \eqref{eq:enough} holds, then for every $n\in\N$ we have
  \begin{equation*}
    \bigcup_{\jjj\in J^n}\proj(S^\lambda_{\kkk\jjj}(P_\lambda))=\bigcup_{\jjj\in J^{n-1}}\bigcup_{(i,j)\in J}\proj(S^\lambda_{\kkk\jjj(i,j)}(P_\lambda))=\bigcup_{\jjj\in J^{n-1}}\proj(S^\lambda_{\kkk\jjj}(P_\lambda))
  \end{equation*}
  and therefore, by the definition of the self-similar set $K_\lambda$,
	\begin{equation*}
    \proj(S^\lambda_{\kkk}(K_\lambda)) = \proj\biggl(S^\lambda_{\kkk}\biggl(\bigcap_{n=0}^\infty\bigcup_{\jjj\in J^n}S_\jjj^\lambda(P_\lambda)\biggr)\biggr) = \bigcap_{n=0}^\infty\bigcup_{\jjj\in J^n}\proj(S^\lambda_{\kkk\jjj}(P_\lambda))=\proj(S^\lambda_{\kkk}(P_\lambda)).
    \end{equation*}

  To verify \eqref{eq:enough}, it is enough to check whether the assumptions of Lemma~\ref{lem:indproj} hold. Let $(a,b)$ be the middle point of $S^\lambda_{\kkk\jjj}(P_\lambda)$. It is easy to see that then $\lambda-\frac{\lambda^3}{1-\lambda}\leq a\leq \lambda+\frac{\lambda^3}{1-\lambda}$ and $1+\lambda^2-\frac{\lambda^3}{1-\lambda}\leq b\leq 1+\lambda-\lambda^2+\frac{\lambda^3}{1-\lambda}$.
  Hence, the inequality
  \begin{align*}
    \lambda^3\leq\min\biggl\{2\biggl(\lambda-\frac{\lambda^3}{1-\lambda}\biggr)-&\frac{1-3\lambda}{1-\lambda}\biggl(1+\lambda-\lambda^2+\frac{\lambda^3}{1-\lambda}\biggr),\\
    &{\frac{2\lambda}{1-2\lambda}}\biggl(1+\lambda^2-\frac{\lambda^3}{1-\lambda}\biggr)-\frac{1-\lambda}{{1-2\lambda}}\biggl(\lambda+\frac{\lambda^3}{1-\lambda}\biggr)\biggr\}
  \end{align*}
  clearly implies the assumptions of Lemma~\ref{lem:indproj}. Numerical calculations show that the above inequality is valid for all $\tfrac14<\lambda<\tfrac13$. Finally, by Lemma~\ref{lem:projhull}, the center of $\proj(S^\lambda_\kkk(P_\lambda))=\proj(S^\lambda_\kkk(K_\lambda))$ is $\proj(S^\lambda_\kkk(0,0))$ and its length is $\frac{2\lambda^{|\kkk|}}{b(1-\lambda)}$, where $b$ is the $y$ coordinate of $\proj(S^\lambda_\kkk(0,0))$. Since $1+\lambda^2-\frac{\lambda^3}{1-\lambda}\leq b\leq 1+\lambda-\lambda^2+\frac{\lambda^3}{1-\lambda}$, we have $\frac{2\lambda^{|\kkk|}}{1-2\lambda^2-2\lambda^3}\leq\frac{2\lambda^{|\kkk|}}{b(1-\lambda)}\leq\frac{2\lambda^{|\kkk|}}{1-\lambda+\lambda^2-2\lambda^3}$. One can show by numerical computations that $2\leq \frac{2}{1-2\lambda^2-2\lambda^3}$ and $\frac{2}{1-\lambda+\lambda^2-2\lambda^3}\leq 3$ for every $\lambda\in[\tfrac14,\tfrac13]$.
\end{proof}

We will next show that the projection is transversal in this region of $\lambda$'s.

\begin{lemma}\label{lem:trans}
  There exists $\delta>0$ such that for every $\tfrac14<\lambda_0<\tfrac13$ and $\kkk,\lll\in\bigcup_{n=5}^\infty J^n$ with $\kkk|_5=(0,1)(1,1)(0,-1)(0,-1)(1,0)$ and $\lll|_5=(0,1)(1,0)(0,1)(0,1)(-1,0)$ we have
  $$
    \delta<\tfrac{\mathrm{d}}{\mathrm{d}\lambda}(\proj(S_\kkk^{\lambda}(0,0))-\proj(S_\lll^\lambda(0,0)))\big|_{\lambda=\lambda_0}<\delta^{-1}.
  $$
\end{lemma}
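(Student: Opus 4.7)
The plan is to decompose $f(\lambda) := \proj(S_\kkk^\lambda(0,0)) - \proj(S_\lll^\lambda(0,0))$ into an explicit ``prefix part'' depending only on the prescribed five-symbol prefixes and a small ``tail part'', and show that the derivative of the prefix part is uniformly bounded away from $0$ and $\infty$ on $[\tfrac14,\tfrac13]$ while the tail contributes only $O(\lambda^4)$ to the derivative. For any $\kkk=(i_1,j_1)\cdots(i_n,j_n) \in J^n$ one has $\proj(S_\kkk^\lambda(0,0)) = a_\kkk(\lambda)/b_\kkk(\lambda)$ with $a_\kkk(\lambda) = \sum_{\ell=1}^n i_\ell\lambda^{\ell-1}$ and $b_\kkk(\lambda) = \sum_{\ell=1}^n j_\ell\lambda^{\ell-1}$. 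Substituting the prescribed prefixes, I would write
\[
a_\kkk = \lambda + \lambda^4 + \lambda^5\alpha_\kkk, \qquad b_\kkk = 1+\lambda-\lambda^2-\lambda^3 + \lambda^5\beta_\kkk,
\]
\[
a_\lll = \lambda - \lambda^4 + \lambda^5\alpha_\lll, \qquad b_\lll = 1+\lambda^2+\lambda^3 + \lambda^5\beta_\lll,
\]
where $\alpha_\kkk,\beta_\kkk,\alpha_\lll,\beta_\lll$ are polynomials with coefficients in $\{-1,0,1\}$, so that they and their derivatives are uniformly bounded on $[\tfrac14,\tfrac13]$ by summing geometric series.

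Next I would define the prefix contribution $F_0(\lambda) = N_0(\lambda)/D_0(\lambda)$, where
\[
N_0(\lambda) = (\lambda+\lambda^4)(1+\lambda^2+\lambda^3) - (\lambda-\lambda^4)(1+\lambda-\lambda^2-\lambda^3) = -\lambda^2 + 2\lambda^3 + 4\lambda^4 + \lambda^5
\]
and $D_0(\lambda) = (1+\lambda-\lambda^2-\lambda^3)(1+\lambda^2+\lambda^3)$. Expanding $N_0'(\lambda)D_0(\lambda) - N_0(\lambda)D_0'(\lambda)$ as an explicit polynomial of degree at most $10$, one verifies numerically (using positivity at the endpoints $\tfrac14$ and $\tfrac13$ together with monotonicity arguments on the polynomial's coefficients) that it is bounded between strictly positive constants on $[\tfrac14,\tfrac13]$. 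Combined with the explicit bounds $D_0(\lambda) \in [1.26,1.36]$ on the interval, this yields $F_0'(\lambda) \in [c_1,c_2]$ for some $0 < c_1 < c_2$ uniformly on $[\tfrac14,\tfrac13]$.

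For the tail, set $N = a_\kkk b_\lll - a_\lll b_\kkk$ and $D = b_\kkk b_\lll$, so $f = N/D$, and write $N - N_0 = \lambda^5 M$ and $D - D_0 = \lambda^5 R$ with $M, R$ explicit polynomial combinations of the $\alpha$'s, $\beta$'s and the prefix polynomials. The uniform coefficient bounds on the tails give $|M|, |R|, |M'|, |R'| \leq K$ on $[\tfrac14,\tfrac13]$ for some explicit constant $K$. Expanding $f' = (N'D - ND')/D^2$ around $F_0'$ produces a bound $|f'(\lambda) - F_0'(\lambda)| \leq C\lambda^4$ for an explicit constant $C$, with the leading $\lambda^4$ coming from differentiating the $\lambda^5$ prefactors. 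Since $C\lambda^4 \leq C/81$ can be made strictly smaller than $c_1$ on the interval, we conclude $f'(\lambda) \geq c_1 - C\lambda^4 \geq \delta > 0$ uniformly in $\kkk,\lll$ and $\lambda_0$, while the upper bound $f'(\lambda) \leq \delta^{-1}$ follows from analogous two-sided estimates on $|N'|, |N|, |D|, |D'|$.

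The main obstacle is the numerical tightness at the left endpoint $\lambda = \tfrac14$, where $F_0'$ attains its minimum (roughly $0.12$) and the tail correction $C\lambda^4$ is largest relative to it. The specific length of the common prefixes is dictated by this balance: a shorter prefix would leave $N_0$ too small, collapsing the gap between the main derivative and the $O(\lambda^4)$ tail. The bookkeeping for the constant $C$ is routine but tedious, since $M$ and $R$ involve products of up to four bounded polynomial factors (the prefix polynomials $u_\kkk, v_\kkk, u_\lll, v_\lll$ and the tail polynomials $\alpha_\kkk, \beta_\kkk, \alpha_\lll, \beta_\lll$) whose derivatives must each be controlled via elementary geometric-series estimates.
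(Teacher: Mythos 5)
Your decomposition is the same as the paper's: write $\proj(S_\kkk^\lambda(0,0))=(\lambda+\lambda^4+a(\lambda))/(1+\lambda-\lambda^2-\lambda^3+b(\lambda))$ and $\proj(S_\lll^\lambda(0,0))=(\lambda-\lambda^4+c(\lambda))/(1+\lambda^2+\lambda^3+d(\lambda))$ with tails whose coefficients lie in $\{-1,0,1\}$, bound the tails and their derivatives by geometric series ($|a|\le\lambda^5/(1-\lambda)$, $|a'|\le\lambda^4(5-4\lambda)/(1-\lambda)^2$, etc.), and finish with a numerical verification on $(\tfrac14,\tfrac13)$. Your prefix algebra is correct ($N_0=-\lambda^2+2\lambda^3+4\lambda^4+\lambda^5$, $D_0\in[1.26,1.36]$), and $F_0'$ indeed increases from roughly $0.12$ at $\lambda=\tfrac14$ to roughly $0.47$ at $\lambda=\tfrac13$.

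The gap is in your final chain $f'(\lambda)\ge c_1-C\lambda^4\ge\delta$, where $c_1=\min_{[\frac14,\frac13]}F_0'\approx 0.12$ and $C$ is a uniform constant with $|f'-F_0'|\le C\lambda^4$. The dominant contribution to $f'-F_0'$ is $|a'|/b_\kkk+|c'|/b_\lll$, and already the worst-case bound $|a'|+|c'|\le 2\lambda^4(5-4\lambda)/(1-\lambda)^2$ is about $15\lambda^4$ at $\lambda=\tfrac13$ (numerically $\approx 0.20$; adding the remaining cross terms pushes the honest tail bound to $\approx 0.25\approx 20\lambda^4$ there). So any admissible $C$ from your bookkeeping is at least about $15$, giving $C\lambda^4\ge 15/81\approx 0.19>c_1$ near $\lambda=\tfrac13$: the inequality $C/81<c_1$ you invoke requires $C<10$, which is not attainable, and note the binding point of your display is the right endpoint, not the left endpoint you identify as the obstacle. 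The fix is to make the comparison pointwise in $\lambda$: show $f'(\lambda)\ge F_0'(\lambda)-T(\lambda)$ with $T(\lambda)$ the explicit $\lambda$-dependent tail bound, and check numerically that $F_0'(\lambda)-T(\lambda)>0$ on all of $(\tfrac14,\tfrac13)$ (it is, with minimum of order $0.06$ near $\lambda=\tfrac14$, since $F_0'$ grows to $\approx 0.47$ while $T$ grows only to $\approx 0.25$). This pointwise organization is exactly what the paper does: it bounds the two fraction derivatives $A(\lambda)$ from below and $B(\lambda)$ from above as explicit functions of $\lambda$, using the same tail estimates, and verifies numerically that $A(\lambda)-B(\lambda)\ge 0.057$ on the whole interval. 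Your treatment of the upper bound $f'\le\delta^{-1}$ is fine and matches the paper's appeal to the general derivative bound \eqref{eq:genbound2}.
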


\begin{proof}
  The proof relies on numerical calculations. By our assumption on $\kkk$ and $\lll$, we have
  $$
    \proj(S_\kkk^{\lambda}(0,0))-\proj(S_\lll^\lambda(0,0))=\frac{\lambda+\lambda^4+a(\lambda)}{1+\lambda-\lambda^2-\lambda^3+b(\lambda)}-\frac{\lambda-\lambda^4+c(\lambda)}{1+\lambda^2+\lambda^3+d(\lambda)},
  $$
  where the functions $a(\lambda)$, $b(\lambda)$, $c(\lambda)$, and $d(\lambda)$ have the form $\sum_{k=5}^\infty\delta_k\lambda^k$, where $\delta_k\in\{-1,0,1\}$. Therefore, we see that
  \begin{equation*}
    \tfrac{\mathrm{d}}{\mathrm{d}\lambda}(\proj(S_\kkk^{\lambda}(0,0))-\proj(S_\lll^\lambda(0,0))) = A(\lambda)-B(\lambda),
  \end{equation*}
  where
  \begin{align*}
    A(\lambda) &= \frac{(1+4\lambda^3+a'(\lambda))(1+\lambda-\lambda^2-\lambda^3+b(\lambda))-(\lambda+\lambda^4+a(\lambda))(1-2\lambda-3\lambda^2+b'(\lambda))}{(1+\lambda-\lambda^2-\lambda^3+b(\lambda))^2}, \\
    B(\lambda) &= \frac{(1-4\lambda^3+c'(\lambda))(1+\lambda^2+\lambda^3+d(\lambda))-(\lambda-\lambda^4+c(\lambda))(2\lambda+3\lambda^2+d'(\lambda))}{(1+\lambda^2+\lambda^3+d(\lambda))^2}.
  \end{align*}
  Since
  \begin{align}
  \max\{|a(\lambda)|, |b(\lambda)|, |c(\lambda)|, |d(\lambda)|\} &\le \frac{\lambda^5}{1-\lambda},\label{eq:thisagain} \\
  \max\{|a'(\lambda)|, |b'(\lambda)|, |c'(\lambda)|, |d'(\lambda)|\} &\le \frac{\lambda^4(5-4\lambda)}{(1-\lambda)^2},
  \end{align}
  we have the estimates
  \begin{equation*}
    A(\lambda)\geq\frac{\bigl(1+4\lambda^3-\frac{\lambda^4(5-4\lambda)}{(1-\lambda)^2}\bigr)\bigl(1+\lambda-\lambda^2-\lambda^3-\frac{\lambda^5}{1-\lambda}\bigr)-\bigl(\lambda+\lambda^4+\frac{\lambda^5}{1-\lambda}\bigr)\bigl(1-2\lambda-3\lambda^2+\frac{\lambda^4(5-4\lambda)}{(1-\lambda)^2}\bigr)}{(1+\lambda-\lambda^2-\lambda^3+b(\lambda))^2}
  \end{equation*}
  and
  \begin{equation*}
    B(\lambda)\leq\frac{\bigl(1-4\lambda^3+\frac{\lambda^4(5-4\lambda)}{(1-\lambda)^2}\bigr)\bigl(1+\lambda^2+\lambda^3+\frac{\lambda^5}{1-\lambda}\bigr)-\bigl(\lambda-\lambda^4-\frac{\lambda^5}{1-\lambda}\bigr)\bigl(2\lambda+3\lambda^2-\frac{\lambda^4(5-4\lambda)}{(1-\lambda)^2}\bigr)}{(1+\lambda^2+\lambda^3+d(\lambda))^2}.
  \end{equation*}
  Numerical calculations show that both numerators appearing in the estimates above are strictly larger than $0.8$ for all $\lambda\in\left(\frac14,\frac13\right)$. Thus, we may apply \eqref{eq:thisagain} for the denominator as well, and therefore, numerical calculations show that $A(\lambda)-B(\lambda) \ge 0.057$ for all $\lambda\in\left(\frac14,\frac13\right)$. Since the other inequality follows by \eqref{eq:genbound2} in a straightforward way, we have finished the proof.
\end{proof}

From now on, we fix a monotone decreasing sequence $\eta = (\eta_n)_{n \in \N}$ of positive real numbers such that $\lim_{n\to\infty}\frac{1}{n}\log\eta_n=-\infty$. It follows that there exists a constant $C>0$ such that
\begin{equation}\label{eq:exp}
  \sum_{k=n}^\infty\eta_k\leq C\eta_n
\end{equation}
for all $n\in\N$. Relying on transversality, we will construct a Cantor set of super-exponentially condensated tuples. Observe that without loss of generality, we may assume that $\delta>0$ in Lemma~\ref{lem:trans} is small, for instance $\delta<\tfrac12$.

\begin{lemma}\label{lem:const}
 Let $0<\delta<\tfrac12$ be as in Lemma~\ref{lem:trans}. Let $\varepsilon>0$, $\tfrac14+3\delta^{-1}\varepsilon<\lambda<\tfrac13-3\delta^{-1}\varepsilon$, and let $\kkk,\lll\in\bigcup_{n=5}^\infty J^n$ with $\kkk|_5=(0,1)(1,1)(0,-1)(0,-1)(1,0)$ and $\lll|_5=(0,1)(1,0)(0,1)(0,1)(-1,0)$ or vice versa. If $\eta_{|\lll|}<\delta^{-1}\varepsilon$ and
  $$
    |\proj(S_\kkk^\lambda(0,0))-\proj(S_\lll^\lambda(0,0))|<\varepsilon,
  $$
  then there exist disjoint closed intervals $I,I'\subset[\lambda-3\delta^{-1}\varepsilon,\lambda+3\delta^{-1}\varepsilon]$ of length $\eta_{|\lll|}$ such that
  \begin{equation}\label{eq:propneed}
    \tfrac{1}{2}\delta\eta_{|\lll|}<|\proj(S_\kkk^{\lambda^*}(0,0))-\proj(S_\lll^{\lambda^*}(0,0))|<\tfrac{3}{2}\delta^{-1}\eta_{|\lll|}
  \end{equation}
  for all $\lambda^*\in I\cup I'$.
\end{lemma}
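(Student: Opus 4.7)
The plan is to apply the two-sided transversality estimate of Lemma~\ref{lem:trans} to the function $g(\lambda')=\proj(S_\kkk^{\lambda'}(0,0))-\proj(S_\lll^{\lambda'}(0,0))$, which, up to sign, is strictly monotone with $|g'(\lambda')|\in(\delta,\delta^{-1})$ on $(\tfrac14,\tfrac13)$. Without loss of generality I assume $g'>\delta$ on $(\tfrac14,\tfrac13)$; the ``vice versa'' case is handled identically after replacing $g$ by $-g$. Since $|g(\lambda)|<\varepsilon$ by hypothesis and $\lambda\pm\delta^{-1}\varepsilon\in(\tfrac14,\tfrac13)$ by the range assumption on $\lambda$, the mean value theorem combined with $g'\ge\delta$ gives $g(\lambda+\delta^{-1}\varepsilon)\ge g(\lambda)+\varepsilon>0$ and $g(\lambda-\delta^{-1}\varepsilon)\le g(\lambda)-\varepsilon<0$, so the intermediate value theorem produces $\lambda_0\in[\lambda-\delta^{-1}\varepsilon,\lambda+\delta^{-1}\varepsilon]$ with $g(\lambda_0)=0$.

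Next I propose the intervals $I=[\lambda_0+\tfrac12\eta_{|\lll|},\lambda_0+\tfrac32\eta_{|\lll|}]$ and $I'=[\lambda_0-\tfrac32\eta_{|\lll|},\lambda_0-\tfrac12\eta_{|\lll|}]$. Both have length $\eta_{|\lll|}$ and are separated by a gap of length $\eta_{|\lll|}>0$, hence are disjoint. Using $|\lambda_0-\lambda|\le\delta^{-1}\varepsilon$ together with $\tfrac32\eta_{|\lll|}<\tfrac32\delta^{-1}\varepsilon$, every point of $I\cup I'$ lies within $\tfrac52\delta^{-1}\varepsilon<3\delta^{-1}\varepsilon$ of $\lambda$, yielding the containment $I\cup I'\subset[\lambda-3\delta^{-1}\varepsilon,\lambda+3\delta^{-1}\varepsilon]$. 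For any $\lambda^*\in I\cup I'$, the mean value theorem together with $g(\lambda_0)=0$ gives
$$
  \delta|\lambda^*-\lambda_0|\le|g(\lambda^*)|\le\delta^{-1}|\lambda^*-\lambda_0|,
$$
and since $|\lambda^*-\lambda_0|\in[\tfrac12\eta_{|\lll|},\tfrac32\eta_{|\lll|}]$ by construction, the target estimate \eqref{eq:propneed} follows.

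There is no serious obstacle here; the argument is essentially a clean application of the intermediate value theorem to the monotone, almost-linear function $g$. The two quantitative checks worth performing carefully are (i) that $\lambda\pm\delta^{-1}\varepsilon$ stays inside the transversality window $(\tfrac14,\tfrac13)$, which is exactly what the range hypothesis on $\lambda$ secures, and (ii) that the shifted intervals $I,I'$ remain within $[\lambda-3\delta^{-1}\varepsilon,\lambda+3\delta^{-1}\varepsilon]$, which is where the hypothesis $\eta_{|\lll|}<\delta^{-1}\varepsilon$ enters. The choice of the offset $\tfrac12\eta_{|\lll|}$ in the definition of $I,I'$ is dictated precisely by the desire to make the lower derivative bound $\delta$ produce the constant $\tfrac12\delta\eta_{|\lll|}$ and the upper bound $\delta^{-1}$ on a window of length $\tfrac32\eta_{|\lll|}$ produce the constant $\tfrac32\delta^{-1}\eta_{|\lll|}$.
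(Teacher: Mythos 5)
Your argument is correct and follows essentially the same route as the paper: locate a zero $\lambda_0$ of the difference $\proj(S_\kkk^{\lambda}(0,0))-\proj(S_\lll^{\lambda}(0,0))$ within $\delta^{-1}\varepsilon$ of $\lambda$ using the transversality bound from Lemma~\ref{lem:trans} (you supply the intermediate value step the paper leaves implicit), take $I,I'$ to be the two intervals at distance between $\tfrac12\eta_{|\lll|}$ and $\tfrac32\eta_{|\lll|}$ from $\lambda_0$, and conclude \eqref{eq:propneed} by the mean value theorem together with the containment check that uses $\eta_{|\lll|}<\delta^{-1}\varepsilon$. No gaps.
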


\begin{proof}
 By Lemma~\ref{lem:trans}, the derivative of the map $\lambda \mapsto \proj(S_\kkk^\lambda(0,0))-\proj(S_\lll^\lambda(0,0))$ lies in $[\delta,\delta^{-1}]$ for $\lambda\in(1/4,1/3)$. Hence, there exists a unique $\lambda_1\in[\lambda-\delta^{-1}\varepsilon,\lambda+\delta^{-1}\varepsilon]$ such that
  $$
    \proj(S_\kkk^{\lambda_1}(0,0))-\proj(S_\lll^{\lambda_1}(0,0))=0.
  $$
  Choose $I=[\lambda_1-\tfrac32\eta_{|\lll|},\lambda_1-\tfrac12\eta_{|\lll|}]$ and $I'=[\lambda_1+\tfrac12\eta_{|\lll|},\lambda_1+\tfrac32\eta_{|\lll|}]$. By the mean value theorem, \eqref{eq:propneed} holds for every $\lambda^*\in I\cup I'$. Note that $\lambda-3\delta^{-1}\varepsilon \le \lambda-\delta^{-1}\varepsilon-2\eta_{|\lll|} \le \lambda_1-\tfrac32\eta_{|\lll|}$ and, similarly, $\lambda_1+\tfrac32\eta_{|\lll|} \le \lambda+3\delta^{-1}\eps$.
\end{proof}

Next we state our main technical lemma which will be used to construct the claimed uncountable set in $\CC_\eta\setminus\EE$. The idea is to model the construction by a binary tree. To that end, let $\Omega_*=\bigcup_{n=0}^\infty\{0,1\}^n$ and $\Omega=\{0,1\}^\N$.

\begin{lemma}\label{prop:construction}
	There exists an injective map $\kkk\colon\Omega_* \to \bigcup_{n=1}^\infty J^n$ and, for every $\omega\in\Omega_*$, there exists a closed interval $I_\omega \subset (\tfrac14,\tfrac13)$ such that the following five conditions hold:
	\begin{enumerate}
		\item\label{it:k} $\min_{\omega\in\{0,1\}^{n+1}}|\kkk(\omega)|>\max_{\tau\in\{0,1\}^n}|\kkk(\tau)|$ for all $n\in\N$. Moreover, for every $\omega\in\Omega_*$, $\kkk(\omega)$ begins with either $(0,1)(1,1)(0,-1)(0,-1)(1,0)$ or $(0,1)(1,0)(0,1)(0,1)(-1,0)$ .
		\item\label{it:int1} For every $\omega\in\Omega_*$, we have
    \begin{equation*}
      I_{\omega0}\cup I_{\omega1}\subset I_\omega\text{ and }I_{\omega0}\cap I_{\omega1}=\emptyset.
    \end{equation*}
		\item\label{it:int2} $\diam(I_\omega)=\eta_{|\kkk(\omega)|}$.
		\item\label{it:dist} For every $\omega\in\Omega_*$, $i\in\{0,1\}$, and $\lambda\in I_{\omega i}$, we have
    \begin{equation*}
      |\proj(S_{\kkk(\omega)}^{\lambda}(0,0))-\proj(S_{\kkk(\omega i)}^{\lambda}(0,0))|<\tfrac{3}{2}\delta^{-1}\eta_{|\kkk(\omega i)|},
    \end{equation*}
		and
    \begin{equation*}
      \min\bigl\{|\proj(S_{\kkk(\omega i)}^{\lambda}(0,0))-\proj(S_{\iii}^{\lambda}(0,0))|:|\iii|\leq\max_{\tau\in\{0,1\}^{|\omega|}}|\kkk(\tau)|\bigr\}\geq \tfrac{1}{2}\delta\eta_{|\kkk(\omega i)|},
    \end{equation*}
    where $\delta>0$ is as in Lemma~\ref{lem:trans}.
		\item\label{it:where} For every $\omega\in\Omega_*$ and $\tfrac14<\lambda<\tfrac13$,
    \begin{equation*}
      0\leq\proj(S_{\kkk(\omega)}^{\lambda}(0,0))\leq\frac{\lambda}{1-\lambda}.
    \end{equation*}
	\end{enumerate}
\end{lemma}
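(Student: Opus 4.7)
The plan is to build the map $\kkk$ and the intervals $I_\omega$ by induction on the depth $n=|\omega|$, arranging that the $5$-letter initial block of $\kkk(\omega)$ \emph{alternates} between the two prefixes $\alpha_0:=(0,1)(1,1)(0,-1)(0,-1)(1,0)$ and $\alpha_1:=(0,1)(1,0)(0,1)(0,1)(-1,0)$ from level to level. Because of this alternation every parent/child pair $(\kkk(\omega),\kkk(\omega i))$ starts with different prefixes, so Lemma~\ref{lem:trans} (and hence Lemma~\ref{lem:const}) is applicable to it. Throughout the induction I maintain the invariant that, uniformly for $\lambda\in I_\omega$, the value $\proj(S^\lambda_{\kkk(\omega)}(0,0))$ lies in the interior of $\proj(S^\lambda_{\alpha_0}(K_\lambda))\cap\proj(S^\lambda_{\alpha_1}(K_\lambda))$ at a fixed positive distance from its boundary; using the explicit formulas $\proj(S^\lambda_{\alpha_0}(0,0))=(\lambda+\lambda^4)/(1+\lambda-\lambda^2-\lambda^3)$ and $\proj(S^\lambda_{\alpha_1}(0,0))=(\lambda-\lambda^4)/(1+\lambda^2+\lambda^3)$ together with the length estimate of Lemma~\ref{prop:projint}, a direct numerical check shows that this overlap has positive width on a neighbourhood of the ``crossing point'' $\lambda^*\approx 0.303$ at which $\proj(S^\lambda_{\alpha_0}(0,0))=\proj(S^\lambda_{\alpha_1}(0,0))$, so the invariant is non-vacuous there.

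The base case is direct: fix $\lambda_0$ near $\lambda^*$ inside this good range, let $\kkk(\emptyset)$ be an initial word of length $N_0$ beginning with $\alpha_0$ whose projection at $\lambda_0$ sits well inside the overlap, and set $I_\emptyset=[\lambda_0-\tfrac12\eta_{N_0},\lambda_0+\tfrac12\eta_{N_0}]$ with $N_0$ large enough that $\eta_{N_0}$ is smaller than the overlap margin; conditions (2), (4) are vacuous and (1), (3), (5) hold by construction. For the inductive step at $\omega\in\{0,1\}^n$ with $\kkk(\omega)$ starting with prefix $\alpha$ and $\alpha'$ the opposite prefix, self-similarity of $K_\lambda$ provides that $\{\proj(S^\lambda_{\alpha'\gamma}(0,0)):\gamma\in J^m\}$ is an $O(\lambda^{5+m})$-net in $\proj(S^\lambda_{\alpha'}(K_\lambda))$; since by the invariant the target $\proj(S^{\lambda_\omega}_{\kkk(\omega)}(0,0))$ lies in $\proj(S^{\lambda_\omega}_{\alpha'}(K_{\lambda_\omega}))$, I may, for any prescribed tolerance $\varepsilon>0$ and all sufficiently large $m$, find words of the form $\alpha'\gamma\in J^{5+m}$ whose projection at the centre $\lambda_\omega$ of $I_\omega$ is within $\varepsilon$ of the target. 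I select two distinct such words $\lll_0,\lll_1$ of different lengths $m_0<m_1$, both large enough that (a) $|\lll_i|>M_n:=\max_{|\tau|=n}|\kkk(\tau)|$, (b) $\eta_{|\lll_i|}$ is smaller than $\tfrac12\delta$ times the minimum of $|\proj(S^\lambda_{\kkk(\omega)}(0,0))-\proj(S^\lambda_\iii(0,0))|$ over $\iii\neq\kkk(\omega)$ with $|\iii|\le M_n$ and over $\lambda$ in a suitable sub-interval of $I_\omega$ (obtained by removing small neighbourhoods of the finitely many exceptional $\lambda$ at which any such rational function of $\lambda$ coincides with $\proj(S^\lambda_{\kkk(\omega)}(0,0))$), and (c) $\eta_{|\lll_i|}<\delta^{-1}\varepsilon_i$ for the achieved tolerance $\varepsilon_i$; super-exponential decay of $\eta$ guarantees all three are met at once. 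Applying Lemma~\ref{lem:const} to each pair $(\kkk(\omega),\lll_i)$ yields two disjoint candidate sub-intervals of length $\eta_{|\lll_i|}$; I pick $I_{\omega 0}$ and $I_{\omega 1}$, one from each pair, so as to be mutually disjoint (this is possible since the crossings $\lambda_1(\lll_0)$ and $\lambda_1(\lll_1)$ are distinct by choosing $\lll_0,\lll_1$ with sufficiently well-separated projections).

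Conditions (1)--(3) are immediate from the output of Lemma~\ref{lem:const} and the choice of $\lll_i$; the first half of (4) is the upper estimate in Lemma~\ref{lem:const}; the second half follows from the lower estimate of Lemma~\ref{lem:const} (separation from $\kkk(\omega)$ itself) combined with requirement (b) and the derivative bound~\eqref{eq:genbound2}, which transfers point-wise separation from every other short word into interval-wise separation over $I_{\omega i}$; condition (5) and the carry-over of the inductive invariant to the next level both follow because the overlap is strictly contained in $[0,\lambda/(1-\lambda)]$ and because our new projection, being within $\tfrac32\delta^{-1}\eta_{|\lll_i|}$ of $\proj(S^\lambda_{\kkk(\omega)}(0,0))$, still lies deep inside the overlap provided its margin is preserved---which holds by the choice of $N_0$ and the super-exponential decay of $\eta$. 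The principal obstacle is orchestrating the overlap invariant, the uniform interval-wise separation from finitely many previous projections, and the disjointness of $I_{\omega 0}$ and $I_{\omega 1}$ all at once at every level; each is ultimately controllable because $\eta$ decays super-exponentially, so the tolerances required at depth $n+1$ can always be made small enough relative to the finite data accumulated through depth $n$.
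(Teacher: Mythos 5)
Your overall strategy mirrors the paper's: alternate the two admissible $5$-letter prefixes between consecutive levels so that Lemma~\ref{lem:trans} (hence Lemma~\ref{lem:const}) applies to every parent/child pair, use the interval/density statement coming from Lemma~\ref{prop:projint} to find new words whose projection approximates the current target, and nest intervals of length $\eta_{|\kkk(\omega)|}$ down a binary tree, with the bookkeeping financed by the super-exponential decay of $\eta$. The step that fails as written is your requirement (b), which is what you rely on for the second half of condition \eqref{it:dist}. You take a minimum of $|\proj(S^\lambda_{\kkk(\omega)}(0,0))-\proj(S^\lambda_{\iii}(0,0))|$ over all short words $\iii\neq\kkk(\omega)$ after deleting ``small neighbourhoods of the finitely many exceptional $\lambda$'' where a coincidence occurs. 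But coincidences need not be isolated: there exist words $\iii\ne\kkk(\omega)$ with $|\iii|\leq\max_{\tau\in\{0,1\}^{|\omega|}}|\kkk(\tau)|$ for which $\proj(S^\lambda_{\iii}(0,0))\equiv\proj(S^\lambda_{\kkk(\omega)}(0,0))$ identically in $\lambda$; for instance $\iii=\kkk(\omega)(0,0)^{j}$, since $S^\lambda_{(0,0)}$ fixes the origin, and such $\iii$ has admissible length whenever $|\kkk(\omega)|$ is not maximal in its level, which your construction forces by choosing the two children of different lengths (more generally, any $\iii$ with $S^\lambda_{\iii}(0,0)$ proportional to $S^\lambda_{\kkk(\omega)}(0,0)$ works because $\proj$ is scale-invariant). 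For such $\iii$ your minimum is $0$ on every sub-interval, so (b) can never be arranged and the induction stalls.

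The missing idea is the dichotomy the paper implements by taking the reference parameter $\lambda''$ transcendental: each difference $\lambda\mapsto\proj(S^\lambda_{\iii}(0,0))-\proj(S^\lambda_{\kkk(\omega)}(0,0))$ is a ratio of integer polynomials, so coincidence at a transcendental point forces identical coincidence; identically coinciding words are then covered by the lower bound of Lemma~\ref{lem:const} (your ``separation from $\kkk(\omega)$ itself''), while the remaining words have a genuine positive gap at $\lambda''$ which is transported to all of $I_{\omega i}$ by the mean value theorem. For that transport one also needs the case analysis behind \eqref{eq:genbound1}--\eqref{eq:genbound2} (words whose first non-zero second coordinate appears much later than the first non-zero first coordinate have huge projection and are harmless; the rest have derivative uniformly bounded by $\delta^{-1}$), a point your blanket appeal to \eqref{eq:genbound2} glosses over, unless you instead guarantee that $I_{\omega i}$ stays inside the pruned sub-interval over which the minimum was taken. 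With this repair — restrict the minimum in (b) to words whose projection is not identically that of $\kkk(\omega)$, and handle the identically equal ones via Lemma~\ref{lem:const} — your argument goes through; the remaining ingredients (alternating prefixes, the interiority/overlap invariant maintained because the total drift is at most $\tfrac32\delta^{-1}C$ times the first $\eta$-value used, and picking one interval from each of the two disjoint pairs produced by Lemma~\ref{lem:const}) coincide with the paper's proof, where the disjointness of $I_{\omega0}$ and $I_{\omega1}$ follows from a simple interval argument and does not require the two crossing points to be separated.
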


\begin{proof}
	Without loss of generality, we may assume that $\delta<1/144$. Let $\kkk=(0,1)(1,1)(0,-1)(0,-1)(1,0)$ and $\lll=(0,1)(1,0)(0,1)(0,1)(-1,0)$. Algebraic calculations show that the unique solution of the equation
  \begin{equation*}
    \proj(S_\kkk^{\lambda}(0,0))=\proj(S_\lll^{\lambda}(0,0))
  \end{equation*}
	in the interval $(\tfrac14,\tfrac13)$ is $\lambda'=(\sqrt{13}-3)/2\approx0.302$. Choose $K\in\N$ to be the smallest natural number such that $[\lambda'- \tfrac12  \eta_K,\lambda'+\tfrac12 \eta_K]\subset(\tfrac14,\tfrac13)$ and $2(\tfrac14)^k>\tfrac32\delta^{-1}\eta_k$ for every $k\geq K$. Let us define
  \begin{align*}
    \kkk(\emptyset) &=
    \begin{cases}
      \kkk, &\text{if } |\kkk|\geq K, \\
      \kkk(0,0)^{K-|\kkk|}, &\text{if }|\kkk|< K,
    \end{cases} \\ 
    I_\emptyset &= [\lambda'-\tfrac12 \eta_{|\kkk(\emptyset)|},\lambda'+\tfrac12 \eta_{|\kkk(\emptyset)|}],
  \end{align*}
  where $(0,0)^K$ is the sequence in $J^K$ containing only $(0,0)$'s. Observe that this does not affect the value of $\proj(S_\kkk^{\lambda'}(0,0))=\proj(S_{\kkk(0,0)^K}^{\lambda'}(0,0))$. By the mean value theorem and Lemma~\ref{lem:trans},
	\begin{equation}\label{eq:firststep}
	  |\proj(S_\lll^{\lambda}(0,0))-\proj(S_{\kkk(\emptyset)}^{\lambda}(0,0))|\leq\delta^{-1}\eta_{|\kkk(\emptyset)|}.
	\end{equation}
  for all $\lambda\in I_\emptyset$.
	
	Let us now define the intervals $I_\omega$ and $\kkk(\omega)$ by induction. If $I$ is an interval, then with notation $cI$ we mean the interval having the same center and $\diam(cI)=c\diam(I)$. Furthermore, for $\omega\in\Omega_*$ let $\omega^-$ be the finite sequence obtained by removing the last element of $\omega$. Let us introduce an auxiliary function $\lll\colon\Omega_* \to \bigcup_{n=1}^\infty J^n$ by setting $\lll(\emptyset)=\lll$ and $\lll(\omega)=\kkk(\omega^-)$ whenever $|\omega|\geq1$. Let $n\geq0$ and suppose that $\kkk(\omega)$ and $I_\omega$ are defined for all $\omega\in\{0,1\}^n$ with the claimed properties. Let $\lambda''\in \delta I_\omega$ be a transcendental number. Choose $m\in\N$ such that
  \begin{align}
    |\lll(\omega)|+m &> \max_{\tau\in\{0,1\}^n}|\kkk(\tau)| \nonumber \\
    \frac{3}{1-\delta}\eta_{|\lll(\omega)|+m} &< \min_{\tau\in\{0,1\}^n}\eta_{|\kkk(\tau)|}, \nonumber \\	
    5\delta^{-1}\eta_{|\lll(\omega)|+m} &< \min\bigl\{|\proj(S_\iii^{\lambda''}(0,0))-\proj(S_\jjj^{\lambda''}(0,0))|: \nonumber \\
    &\qquad\qquad\qquad\qquad\qquad\quad\;\;\proj(S_\iii^{\lambda''}(0,0))\neq\proj(S_\jjj^{\lambda''}(0,0)) \label{eq:other} \\
    &\qquad\qquad\qquad\qquad\qquad\quad\;\,\text{ and } |\iii|,|\jjj|\leq\max_{\tau\in\{0,1\}^n}|\kkk(\tau)|\bigr\} \nonumber.
  \end{align}
  By Lemma~\ref{prop:projint}, $\proj(S_{\lll(\omega)}^{\lambda''}(K_{\lambda''}))$ is an interval with center $\proj(S_{\lll(\omega)}^{\lambda''}(0,0))$. Furthermore, the length of $\proj(S_{\lll(\omega)}^{\lambda''}(K_{\lambda''}))$ is at least $2\lambda^{|\lll(\omega)|}$. By the induction assumption (or, if $\omega=\emptyset$, by \eqref{eq:firststep}) and the fact that $|\kkk(\omega)|\geq K$, we have
  \begin{equation*}
    2\lambda^{|\lll(\omega)|}\geq 2\lambda^{|\kkk(\omega)|}\geq\tfrac32\delta^{-1}\eta_{|\kkk(\omega)|}\geq |\proj(S_{\lll(\omega)}^{\lambda''}(0,0))-\proj(S_{\kkk(\omega)}^{\lambda''}(0,0))|
  \end{equation*}
  and $\proj(S_{\kkk(\omega)}^{\lambda''}(0,0))$ is an interior point of $\proj(S_{\lll(\omega)}^{\lambda''}(K_{\lambda''}))$. Since $\{\proj(S_{\lll(\omega)\lll'}^{\lambda''}(0,0)):|\lll'|\ge m\}$ is dense in $\proj(S_{\lll}^{\lambda''}(K_{\lambda''}))$, there exist $\lll'_1,\lll_2'$ with $|\lll'_1|,|\lll'_2|\ge m$ such that $\lll_1'\neq\lll_2'$ and
  \begin{equation} \label{eq:close}
    |\proj(S_{\kkk(\omega)}^{\lambda''}(0,0))-\proj(S_{\lll(\omega)\lll'_j}^{\lambda''}(0,0))|<\delta\eta_{|\lll(\omega)|+m}
  \end{equation}
  for both $j \in\{1,2\}$. Now, applying Lemma~\ref{lem:const} with $\varepsilon=\delta\eta_{|\lll(\omega)|+m}$ for both $\lll(\omega)\lll_1$ and $\lll(\omega)\lll_2$, we see that there exist disjoint closed intervals $I_{1,j},I_{2,j}$ such that $\diam(I_{k,j})=\eta_{|\lll(\omega)\lll_j|}$,
  \begin{equation} \label{eq:cont2}
  I_{k,j}\subset[\lambda''-3\eta_{|\lll(\omega)|+m},\lambda''+3\eta_{|\lll(\omega)|+m}]
  \end{equation}
  for all $k,j \in \{1,2\}$, and
  \begin{equation} \label{eq:firsthalf}
    \tfrac{1}{2}\delta\eta_{|\lll(\omega)\lll_j'|}<|\proj(S_{\kkk(\omega)}^{\lambda}(0,0))-\proj(S_{\lll(\omega)\lll'_j}^{\lambda}(0,0))|<\tfrac{3}{2}\delta^{-1}\eta_{|\lll(\omega)\lll_j'|}
  \end{equation}
  for all $\lambda\in I_{k,j}$. Since $I_{1,j}\cap I_{2,j}=\emptyset$ for $j\in\{1,2\}$ there exist $k_1,k_2\in\{1,2\}$ such that $I_{k_1,1}\cap I_{k_2,2}=\emptyset$.

  To finish the induction, let us show that the choices $I_{\omega0}=I_{k_1,1}$, $I_{\omega1}=I_{k_2,2}$, $\kkk(\omega0)=\lll(\omega)\lll_1$ and $\kkk(\omega1)=\lll(\omega)\lll_2$ satisfy the claimed properties. The condition \eqref{it:k} follows by the definition. From \eqref{it:k} it follows that
  \begin{equation} \label{eq:bounds}
    0\leq\proj(S_{\kkk(\omega i)}^{\lambda}(0,0))\leq\max\biggl\{\frac{\lambda+\lambda^4+\frac{\lambda^5}{1-\lambda}}{1+\lambda-\lambda^2-\lambda^3-\frac{\lambda^5}{1-\lambda}},\frac{\lambda-\lambda^4+\frac{\lambda^5}{1-\lambda}}{1+\lambda^2+\lambda^3-\frac{\lambda^5}{1-\lambda}}\biggr\}<\frac{\lambda}{1-\lambda}
  \end{equation}
  for all $\lambda\in(\tfrac14,\tfrac13)$, which implies the condition \eqref{it:where}. To show \eqref{it:int1}, it is enough to show that $[\lambda''-3\eta_{|\lll(\omega)|+m},\lambda''+3\eta_{|\lll(\omega)|+m}]\subset I_\omega$. Since $\lambda''\in\delta I_\omega$ and $\diam(I_\omega)=\eta_{|\kkk(\omega)|}$, the inclusion follows from $3\eta_{|\lll(\omega)|+m}+\delta\eta_{|\kkk(\omega)|}<\eta_{|\kkk(\omega)|}$, which is our assumption on $m$. Condition \eqref{it:int2} follows again by the definition of the intervals. The first part of condition \eqref{it:dist} follows by \eqref{eq:firsthalf}. Finally, we prove the second part of \eqref{it:dist}. Let $\iii$ be a finite sequence such that $|\iii|\leq\max_{\tau\in\{0,1\}^n}|\kkk(\tau)|$. If $\proj(S_\iii^{\lambda''}(0,0))=\proj(S_{\kkk(\omega)}^{\lambda''}(0,0))$, then $\proj(S_\iii^{\lambda}(0,0))\equiv\proj(S_{\kkk(\omega)}^{\lambda}(0,0))$. Indeed, since $\lambda''$ is transcendental and  $\lambda\mapsto\proj(S_\iii^{\lambda}(0,0))-\proj(S_{\kkk(\omega)}^{\lambda}(0,0))$ is a ratio of polynomials with integer coefficients, a transcendental $\lambda''$ cannot be a root of it unless the ratio is the constant zero function of $\lambda$. Thus, by \eqref{eq:firsthalf},
  \begin{equation*}
    |\proj(S_{\iii}^{\lambda}(0,0))-\proj(S_{\kkk(\omega i)}^{\lambda}(0,0))|=|\proj(S_{\kkk(\omega)}^{\lambda}(0,0))-\proj(S_{\kkk(\omega i)}^{\lambda}(0,0))|>\tfrac{1}{2}\delta\eta_{|\kkk(\omega i)|}
  \end{equation*}
  for all $\lambda\in I_{\omega i}$.

  Now, let us consider the case when $\proj(S_\iii^{\lambda''}(0,0))\neq \proj(S_{\kkk(\omega)}^{\lambda''}(0,0))$. Clearly, if $\iii$ does not contain any symbol with non-zero second coordinate, then $|\proj(S_\iii^{\lambda''}(0,0))|=\infty$ and so there is nothing to prove. Let $m$ be the difference of the position of the first symbol with non-zero first and the position of the first symbol with non-zero second coordinate. If $m<\frac{-\log3}{\log2}$, then $\lambda^m(1-2\lambda)>\tfrac{2\lambda}{1-\lambda}$ for all $\tfrac14<\lambda<\tfrac13$, and, by \eqref{eq:genbound1} and \eqref{eq:bounds}, clearly
  \begin{equation*}
    |\proj(S_\iii^{\lambda}(0,0))-\proj(S_{\kkk(\omega i)}^\lambda(0,0))|\geq\frac{\lambda}{1-\lambda}>\eta_{|\kkk(\omega i)|}.
  \end{equation*}
  Thus, we may assume that $m>\frac{-\log3}{\log2}$ and so by \eqref{eq:genbound2}, $\bigl|\tfrac{\mathrm{d}}{\mathrm{d}\lambda}\proj(S_\iii^{\lambda}(0,0))\bigr|$ is uniformly bounded by $\delta^{-1}$ on $(\tfrac14,\tfrac13)$ as $\delta^{-1}>144$. Then, by the mean value theorem, \eqref{eq:close}, \eqref{eq:other}, \eqref{eq:cont2}, the monotonicity of $\eta$, and the choice of $\delta$,
  \begin{align*}
    |\proj(S_\iii^{\lambda}(0,0))-&\proj(S_{\kkk(\omega i)}^\lambda(0,0))|\\
    &\geq|\proj(S_\iii^{\lambda''}(0,0))-\proj(S_{\kkk(\omega i)}^{\lambda''}(0,0))|-\delta^{-1}|\lambda-\lambda''|\\
    &\geq|\proj(S_\iii^{\lambda''}(0,0))-\proj(S_{\kkk(\omega)}^{\lambda''}(0,0))|-\delta\eta_{|\lll(\omega)|+m}-\delta^{-1}|\lambda-\lambda''|\\
    &\geq 5\delta^{-1}\eta_{|\lll(\omega)|+m}-\delta\eta_{|\lll(\omega)|+m}-\delta^{-1}|\lambda-\lambda''|\\
    &\geq5\delta^{-1}\eta_{|\lll(\omega)|+m}-\delta\eta_{|\lll(\omega)|+m}-3\delta^{-1}\eta_{|\lll(\omega)|+m}\\
    &\geq (2\delta^{-1}-\delta)\eta_{|\lll(\omega)|+m}\geq\eta_{|\kkk(\omega i)|}
  \end{align*}
  for all $\lambda\in I_{\omega i}$.
\end{proof}

We are now ready to prove the main result.

\begin{proof}[Proof of Theorem~\ref{thm:main2}]
  Let $\{I_\omega\}_{\omega\in\Omega_*}$ be a collection of intervals given by Lemma~\ref{prop:construction} and define $F=\bigcap_{n=0}^\infty\bigcup_{\omega\in\{0,1\}^n}I_\omega$. By the conditions \eqref{it:int1} and \eqref{it:int2} of Lemma~\ref{prop:construction}, it is clear that $F$ is compact, non-empty, and uncountable. Write $\Omega=\{0,1\}^\N$ and define a map $\lambda^*\colon\Omega \to F$ by $\{\lambda^*(\omega)\}=\bigl\{\bigcap_{n=0}^\infty I_{\omega|_n}\bigr\}$, where $\omega|_n$ is the finite sequence obtained by the first $n$ symbols of $\omega$. Clearly, $\lambda^*(\omega)$ is well defined by the conditions \eqref{it:int1} and \eqref{it:int2} of Lemma~\ref{prop:construction}. By Lemma~\ref{prop:construction}\eqref{it:dist} and \eqref{eq:exp}, the sequence $(\proj(S_{\kkk(\omega|_n)}^{\lambda^*(\omega)}(0,0)))_{n \in \N}$ is a Cauchy sequence for every $\omega\in\Omega$. Therefore, we can define $t^*\colon\Omega\to\R$ by setting $t^*(\omega)=\lim_{n\to\infty}\proj(S_{\kkk(\omega|_n)}^{\lambda^*(\omega)}(0,0))$. Note that, by Lemma~\ref{prop:construction}\eqref{it:where}, $t^*(\omega)<\frac{\lambda^*(\omega)}{1-\lambda^*(\omega)}$. Define
  \begin{equation*}
    \Upsilon=\bigcup_{\omega\in\Omega}(\lambda^*(\omega),t^*(\omega))\subset\R^2.
  \end{equation*}
  It is clear that $\Upsilon$ is uncountable (since $F$ is uncountable) and hence, to finish the proof, it suffices to show that $\Upsilon\subset \mathcal{C}_{\eta'}\setminus \mathcal{E}$, where $\eta' = (\eta_n')_{n \in \N}$ is a sequence such that $\eta_n'=\tfrac32\delta^{-1}C\eta_n$ for all $n \in \N$, $\delta$ is as in Lemma~\ref{lem:trans}, and $C$ as in \eqref{eq:exp}.

  Let us first show that $(\lambda^*(\omega),t^*(\omega))\in \mathcal{C}_{\eta'}$ for every $\omega\in\Omega$. Recall that the function $\beta\colon\{1,2,3\}^n\times\{1,2,3\}^n \to J^n$ defined before Lemma \ref{lem:changesys2} is invertible outside the diagonal. Let us define $\beta^{-1}\colon J\to\{1,2,3\} \times \{1,2,3\}$ by $\beta^{-1}(0,0)=(1,1)$ and extend it as before. Let $m(n)$ be the unique integer such that $|\kkk(\omega|_{m(n)-1})|\leq n<|\kkk(\omega|_{m(n)})|$ and define a pair of sequences in $\{1,2,3\}^n$ by $(\iii_n,\jjj_n)=\beta^{-1}\bigl(\kkk(\omega|_{m(n)-1})(0,0)^{n-|\kkk(\omega|_{m(n)-1})|}\bigr)$ for all $n\in\N$. By Lemma~\ref{prop:construction}\eqref{it:k}, as for every $m$ it holds that $\kkk(\omega|_m)|_1=(0,1)$, we have $(\iii_n|_1,\jjj_n|_1)=(2,1)$. Hence,
  \begin{align*}
    |t^*(\omega)-\proj(S_{\beta(\iii_n,\jjj_n)}^{\lambda^*(\omega)}(0,0))|&=|t^*(\omega)-\proj(S_{\kkk(\omega|_{m(n)-1})}^{\lambda^*(\omega)}(0,0))|\\
    &\leq|\lim_{\ell\to\infty}\proj(S_{\kkk(\omega|_{m(\ell)})}^{\lambda^*(\omega)}(0,0))-\proj(S_{\kkk(\omega|_{m(n)-1})}^{\lambda^*(\omega)}(0,0))|\\
    &\leq\sum_{\ell=m(n)}^\infty|\proj(S_{\kkk(\omega|_{\ell})}^{\lambda^*(\omega)}(0,0))-\proj(S_{\kkk(\omega|_{\ell-1})}^{\lambda^*(\omega)}(0,0))|.
  \end{align*}
  By definition, $\lambda^*(\omega)\in I_{\omega|_\ell}$ for every $\ell\geq0$ and by Lemma~\ref{prop:construction}\eqref{it:dist}, we obtain
  \begin{align*}
    \sum_{\ell=m(n)}^\infty|\proj(S_{\kkk(\omega|_{\ell})}^{\lambda^*(\omega)}(0,0))-&\proj(S_{\kkk(\omega|_{\ell-1})}^{\lambda^*(\omega)}(0,0))|\leq\tfrac{3}{2}\delta^{-1}\sum_{\ell=m(n)}^\infty\eta_{|\kkk(\omega|_{\ell})|}\\
    &\leq\tfrac{3}{2}\delta^{-1}\sum_{\ell=|\kkk(\omega|_{m(n)})|}^\infty\eta_\ell\leq\tfrac{3}{2}\delta^{-1}C\eta_{|\kkk(\omega|_{m(n)})|}\leq\tfrac{3}{2}\delta^{-1}C\eta_{n},
  \end{align*}
  where in the last step we applied \eqref{eq:exp}. Recalling Lemma~\ref{lem:changesys2}, we have now shown that $(\lambda^*(\omega),t^*(\omega))\in \mathcal{C}_{\eta'}$.

  Let us then show that $(\lambda^*(\omega),t^*(\omega))\notin \mathcal{E}$. Suppose to the contrary that $(\lambda^*(\omega),t^*(\omega))\in \mathcal{E}$. Then there exists a pair of finite words $\iii,\jjj$ such that $\varphi^{\lambda^*(\omega),t^*(\omega)}_\iii(0)=\varphi^{\lambda^*(\omega),t^*(\omega)}_\jjj(0)$ but $\iii|_1\neq\jjj|_1$ and in particular, $(\iii|_1,\jjj|_1)\in\{(1,2),(2,1)\}$. Without loss of generality, we may assume that $\iii|_1=2$ and $\jjj|_1=1$, and, by possibly extending one of the words by $1$'s, $|\iii|=|\jjj|$. Thus, we have $t^*(\omega)=\proj(S_{\beta(\iii,\jjj)}^{\lambda^*(\omega)}(0,0))$. Choose $n\in\N$ so that $\max_{\tau\in\{0,1\}^n}|\kkk(\tau)|>|\beta(\iii,\jjj)|$ and $\eta_{|\kkk(\omega|_{n+1})|}/\eta_{|\kkk(\omega|_{n+2})|}>3\delta^{-2}C$. Then, by Lemma~\ref{prop:construction}\eqref{it:dist},
  \begin{align*}
    0&=|t^*(\omega)-\proj(S_{\beta(\iii,\jjj)}^{\lambda^*(\omega)}(0,0))|\\&=|\lim_{\ell\to\infty}\proj(S_{\kkk(\omega|_{\ell})}^{\lambda^*(\omega)}(0,0))-\proj(S_{\beta(\iii,\jjj)}^{\lambda^*(\omega)}(0,0))|\\
    &\geq|\proj(S_{\kkk(\omega|_{n+1})}^{\lambda^*(\omega)}(0,0))-\proj(S_{\beta(\iii,\jjj)}^{\lambda^*(\omega)}(0,0))|\\&\qquad\qquad-\sum_{\ell=n+2}^\infty|\proj(S_{\kkk(\omega|_{\ell})}^{\lambda^*(\omega)}(0,0))-\proj(S_{\kkk(\omega|_{\ell-1})}^{\lambda^*(\omega)}(0,0))|\\
    &\geq\tfrac12\delta\eta_{|\kkk(\omega|_{n+1})|}-\tfrac{3}{2}\delta^{-1}C\eta_{|\kkk(\omega|_{n+2})|}>0,
  \end{align*}
   which is a contradiction.
\end{proof}

\section{Average exponential condensation}\label{sec:aec}

In this section, we prove Proposition \ref{thm:main1}. We remark that the proof strongly relies on exact dimensionality proven by Feng and Hu \cite{FengHu2009} and the behavior of the Shannon entropy described by Hochman \cite{Hochman2014}.  More precisely, the proof is standard and essentially follows from Hochman \cite[Section~5.2]{Hochman2014} and Peres and Solomyak \cite{PerSol}. Nevertheless, we give the full details for the convenience of the reader.

Recall that a Borel probability measure $\nu$ on $\R$ is \emph{exact-dimensional} if the lower/upper Hausdorff/packing dimensions of $\nu$ coincide. We refer to the book of Falconer \cite{Falconer1997} for more details on dimensions of measures. Furthermore, the \emph{Shannon entropy} of $\nu$ with respect to the partition $\DD_n = \{[i2^{-n},(i+1)2^{-n})\}_{i\in\mathbb{Z}}$ is
\begin{equation*}
H(\nu,\DD_n)=-\sum_{D\in\DD_n}\nu(D)\log\nu(D)
\end{equation*}
for all $n \in \N$.

\begin{proof}[Proof of Proposition \ref{thm:main1}]
	By Feng and Hu \cite[Theorem 2.8]{FengHu2009}, the natural measure $\mu$ is exact-dimensional. Therefore, by Young \cite[Theorem~4.4]{Young}, it has dimension
	\begin{equation*}
	\dimh(\mu)=\lim_{n\to\infty}\frac{1}{n\log 2}H(\mu,\DD_n).
	\end{equation*}
	Define
	\begin{equation*}
	\mu^{n}=\frac{1}{\# I^n}\sum_{\iii\in I^n}\delta_{\varphi_{\iii}(0)},
	\end{equation*}
	where $\delta_x$ is the Dirac measure at $x$, and let $r(n)$ be the unique integer such that $|\lambda|^{r(n)}\diam(X) \leq 2^{-n} < |\lambda|^{r(n)-1}\diam(X)$. By Hochman \cite[Theorem~1.3]{Hochman2014}, we have
	\begin{equation*}
	\lim_{n\to\infty}\frac{1}{n\log2}\biggl( H(\mu^{r(n)},\DD_{qn}) - H(\mu^{r(n)},\DD_n) \biggr) = 0
	\end{equation*}
	for all $q\in\N$.
	
	Our goal is to show that a closer examination of the Shannon entropy with respect to the partition $\DD_{qn}$ leads us to the claimed formula. Indeed, we shall show that
	\begin{align*}
	\dimh(\mu)&=\lim_{n\to\infty}\frac{1}{n\log 2}H(\mu,\DD_n)=\lim_{n\to\infty}\frac{1}{n\log 2}H(\mu^{r(n)},\DD_n)=\lim_{n\to\infty}\frac{1}{n\log 2}H(\mu^{r(n)},\DD_{qn})\\
	&= -\lim_{n\to\infty}\frac{1}{n\log 2}\int\log\mu^{r(n)}(B(x,2^{-qn})) \dd\mu^{r(n)}(x) = \frac{\log\# I - \Lambda(|\lambda|^q)}{\log|\lambda|^{-1}}
	\end{align*}
	for all $q\in\N$. Note that the first and third equality follow from the above mentioned results.
	
	Let us verify the remaining equalitites. Observe first that we have
	\begin{align*}
	H(&\mu^{r(n)},\DD_{qn}) = -\sum_{D\in\DD_{qn}} \mu^{r(n)}(D)\log\mu^{r(n)}(D) \ge -\int\log\mu^{r(n)}(B(x,2^{-qn})) \dd\mu^{r(n)}(x) \\
	&\ge -\sum_{D\in\DD_{qn}} \mu^{r(n)}(D)\log(\mu^{r(n)}(D-2^{-qn}) + \mu^{r(n)}(D) + \mu^{r(n)}(D+2^{-qn})) \\
	&= -\sum_{D\in\DD_{qn}} \mu^{r(n)}(D)\biggl( \log\mu^{r(n)}(D) + \log\biggl( 1+\frac{\mu^{r(n)}(D-2^{-qn}) + \mu^{r(n)}(D+2^{-qn})}{\mu^{r(n)}(D)} \biggr) \biggr) \\
	&\ge H(\mu^{r(n)},\DD_{qn}) - \sum_{D\in\DD_{qn}} \mu^{r(n)}(D-2^{-qn}) + \mu^{r(n)}(D+2^{-qn}) \\
	&\ge H(\mu^{r(n)},\DD_{qn}) - 2,
	\end{align*}
	which proves the fourth equality. A similar reasoning shows that
	\begin{equation*}
	|H(\mu,\DD_n) - H(\mu^{r(n)},\DD_n)| \le 9
	\end{equation*}
	yielding the second equality. Indeed, this follows since
	\begin{align*}
	\mu(D) &\le \frac{1}{\# I^{r(n)}} \#\{\iii\in I^{r(n)} : \fii_\iii(X) \cap D \ne \emptyset\} \\
	&\le \frac{1}{\# I^{r(n)}} \#\{\iii\in I^{r(n)} : \fii_\iii(0)\in (D-2^{-n}) \cup D \cup (D+2^{-n})\} \\
	&= \mu^{r(n)}(D-2^{-n}) + \mu^{r(n)}(D) + \mu^{r(n)}(D+2^{-n})
	\end{align*}
	and $\mu^{r(n)}(D) \le \mu(D-2^{-n}) + \mu(D) + \mu(D+2^{-n})$
	for all $D \in \DD_{n}$.
	Finally, we also have
	\begin{align*}
	-\int\log\mu^{r(n)}(&B(x,2^{-qn})) \dd\mu^{r(n)}(x) = -\sum_{\iii\in I^{r(n)}} \frac{1}{\# I^{r(n)}} \log\mu^{r(n)}(B(\fii_\iii(0),2^{-qn})) \\
	&= -\sum_{\iii\in I^{r(n)}} \frac{1}{\# I^{r(n)}} \log\frac{\#\{\jjj\in I^{r(n)}:|\fii_\iii(0)-\fii_\jjj(0)|\le2^{-qn}\}}{\# I^{r(n)}} \\
	&= \log\# I^{r(n)} - \sum_{\iii\in I^{r(n)}} \frac{1}{\# I^{r(n)}} \log\#\{\jjj\in I^{r(n)}:|\fii_\iii(0)-\fii_\jjj(0)|\le2^{-qn}\}
	\end{align*}
	which gives the fifth equality and finishes the proof.
\end{proof}

\begin{ack}
  The authors are grateful to Simon Baker, Michael Hochman, and the anonymous referee for helpful comments which improved the presentation.
\end{ack}


\end{document}